\documentclass[a4paper,11pt,reqno]{amsart}
%%%%%%%%%%%%%%%%%%%%
\usepackage[T1]{fontenc}
\usepackage[utf8]{inputenc}
\usepackage[english]{babel}
\usepackage{amsmath,amsthm,amssymb}
\usepackage{xcolor}
\usepackage{fullpage}
\usepackage{siunitx}
\sisetup{per-mode = symbol, exponent-product = \cdot}
\usepackage{pgfplotstable}
\pgfplotsset{
compat = 1.17,
tick label style = {font = \tiny},
legend style = {font = \tiny},
xlabel style={yshift=+0.5ex},
ylabel style={yshift=-1.0ex}
}
\usepackage[width=0.95\textwidth]{caption}
\usepackage[width=0.95\textwidth,labelfont=rm]{subcaption}
\usepackage{hyperref}
%%%%%%%%%%%%%%%%%%%%
\usepackage{fancyhdr}
\cfoot{\small\thepage}
\lhead{}
\rhead{}

\advance\footskip0.5cm
\pagestyle{fancy}
\setlength{\headheight}{14pt}
%%%%%%%%%%%%%%%%%%%%
\makeatletter
\def\@seccntformat#1{%
  \protect\textup{\protect\@secnumfont
    \ifnum\pdfstrcmp{subsection}{#1}=0 \bfseries\fi
    \csname the#1\endcsname
    \protect\@secnumpunct
  }%
}
\makeatother
%%%%%%%%%%%%%%%%%%%%
\newtheorem{theorem}{Theorem}[section]
\newtheorem{proposition}[theorem]{Proposition}
\newtheorem{algorithm}[theorem]{Algorithm}
\newtheorem{definition}[theorem]{Definition}
\newtheorem{remark}[theorem]{Remark}
%%%%%%%%%%%%%%%%%%%%
\newcommand\ee{\boldsymbol{e}}
\newcommand\hh{\boldsymbol{h}}
\newcommand\mm{\boldsymbol{m}}
\newcommand\nnu{\boldsymbol{\nu}}
\newcommand\rr{\boldsymbol{r}}
\newcommand\uu{\boldsymbol{u}}
\newcommand\vv{\boldsymbol{v}}
\newcommand\ww{\boldsymbol{w}}
\newcommand\zz{\boldsymbol{z}}
\newcommand\CC{\boldsymbol{C}}
\newcommand\HH{\boldsymbol{H}}
\newcommand\LL{\boldsymbol{L}}

\newcommand\E{\mathcal{E}}
\newcommand\J{\mathcal{J}}
\newcommand\T{\mathcal{T}}
\newcommand\OO{\mathcal{O}}
\renewcommand\S{\mathcal{S}}
\newcommand\Kh{\boldsymbol{\mathcal{K}}_h}
\newcommand\Mh{\boldsymbol{\mathcal{M}}_h}
\newcommand\Nh{\mathcal{N}_h}
\newcommand\pphi{\boldsymbol{\phi}}
\newcommand\vvphi{\boldsymbol{\vphi}}
\newcommand\ppsi{\boldsymbol{\psi}}
\newcommand\vphi{\varphi}
\newcommand\zzeta{\boldsymbol{\zeta}}
\newcommand\interp{\mathcal{I}_h}
\newcommand\Interp{\boldsymbol{\mathcal{I}}_h}
\newcommand\0{\boldsymbol{0}}
\newcommand\sphere{\mathbb{S}^2}
\newcommand\grad{\nabla}
\newcommand\Grad{\boldsymbol{\nabla}}
\newcommand\Lapl{\boldsymbol{\Delta}}
\newcommand\Ph{\mathbb{P}_h}
\newcommand\R{\mathbb{R}}
\newcommand\N{\mathbb{N}}
\renewcommand{\vec}[1]{\mathbf{#1}}
\newcommand{\abs}[1]{\lvert #1 \rvert}
\newcommand{\dual}[3][]{\langle #2,#3 \rangle_{#1}}
\newcommand{\inner}[3][]{\langle #2,#3 \rangle_{#1}}
\newcommand{\norm}[2][]{\lVert #2 \rVert_{#1}}
\DeclareMathOperator{\diam}{diam}

\newcommand\ddt{\frac{\mathrm{d}}{\mathrm{d}t}}
\newcommand\de{\partial}
\newcommand\dt{\mathrm{d}t}
\newcommand\mmt{\de_t \mm}
\newcommand\mmtt{\de_{tt} \mm}

\newcommand\vvt{\de_t \vv}
\newcommand\wwt{\de_t \ww}

\newcommand\eps{\varepsilon}
\newcommand\Ms{M_{\mathrm{s}}}
\newcommand\heff{\hh_{\mathrm{eff}}}
\newcommand\Heff{\HH_{\mathrm{eff}}}
\newcommand\Hext{\HH_{\mathrm{ext}}}
\newcommand{\weakstarto}{\overset{\ast}{\rightharpoonup}}
\newcommand{\weakto}{\rightharpoonup}
\newcommand\hmin{h_{\mathrm{min}}}
\newcommand\Cinv{C_{\mathrm{inv}}}
%%%%%%%%%%%%%%%%%%%%

\begin{document}
%%%%%%%%%%%%%%%%%%%%
\title{Numerical analysis of the Landau--Lifshitz--Gilbert equation with inertial effects}
%%%%%%%%%%%%%%%%%%%%
\author{Michele~Ruggeri}
\address{TU Wien,
Institute of Analysis and Scientific Computing,
Wiedner Hauptstrasse~8--10, 1040 Vienna, Austria}
\email{michele.ruggeri@asc.tuwien.ac.at}
%%%%%%%%%%%%%%%%%%%%
\date{\today}
%%%%%%%%%%%%%%%%%%%%
\keywords{finite element method, inertial Landau--Lifshitz--Gilbert equation, micromagnetics}
%%%%%%%%%%%%%%%%%%%%
\subjclass[2010]{35K61, 65M12, 65M60, 65Z05}
%%%%%%%%%%%%%%%%%%%%

%%%%%%%%%%%%%%%%%%%%
\begin{abstract}
We consider the numerical approximation of the inertial Landau--Lifshitz--Gilbert equation (iLLG),
which describes the dynamics of the magnetization in ferromagnetic materials at subpicosecond time scales.
We propose and analyze two fully discrete numerical schemes:
The first method is based on a reformulation of the problem
as a linear constrained variational formulation for the linear velocity.
The second method exploits a reformulation of the problem as a first order system in time for
the magnetization and the angular momentum.
Both schemes are implicit, based on first-order finite elements, and generate approximations
satisfying the unit-length constraint of iLLG at the vertices of the underlying mesh.
For both methods,
we prove convergence of the approximations towards a weak solution of the problem.
Numerical experiments validate the theoretical results
and show the applicability of the methods for the simulation of ultrafast magnetic processes.
\end{abstract}
%%%%%%%%%%%%%%%%%%%%

%%%%%%%%%%%%%%%%%%%%
\maketitle
%%%%%%%%%%%%%%%%%%%%

\thispagestyle{fancy}

%%%%%%%%%%%%%%%%%%%%
\section{Introduction}
%%%%%%%%%%%%%%%%%%%%

%%%%%%%%%%%%%%%%%%%%
\subsection{Magnetization dynamics with inertial effects}
%%%%%%%%%%%%%%%%%%%%

The understanding of the magnetization dynamics
and the capability to perform reliable numerical simulations of magnetic systems
play
a fundamental role in the design of many technological applications,
e.g., hard disk drives.
A well-accepted model to describe the magnetization dynamics
in ferromagnetic materials is 
the Landau--Lifshitz--Gilbert equation (LLG),
which, in the so-called Gilbert form, is given by
\begin{equation} \label{eq:LLG}
\mmt = - \gamma_0 \, \mm \times \Heff[\mm] + \alpha \, \mm \times \mmt.
\end{equation}
Here,
$\mm$ denotes the normalized magnetization (dimensionless and satisfying $\abs{\mm}=1$),
the effective field $\Heff[\mm]$ (in \si{\ampere\per\meter}),
up to a negative multiplicative constant,
is the functional derivative of the micromagnetic energy $\E[\mm]$ (in \si{\joule}) with respect to the magnetization,
while $\gamma_0>0$ and $\alpha>0$ denote the gyromagnetic ratio (in \si{\meter\per\ampere\per\second})
and the Gilbert damping parameter (dimensionless), respectively.
The first term on the right-hand side of~\eqref{eq:LLG}
describes the precession of the magnetization around the effective field.
The second term is dissipative and pushes the magnetization
towards the effective field.
The resulting dynamics is a damped precession,
where the magnetization rotates around the effective field while being damped towards it;
see Figure~\ref{fig:llg-vs-illg}(a).

%%%%%%%%%%%%%%%%%%%%
\begin{figure}[h]
\centering
\begin{subfigure}{0.3\textwidth}
\centering
\includegraphics[height=2.7cm]{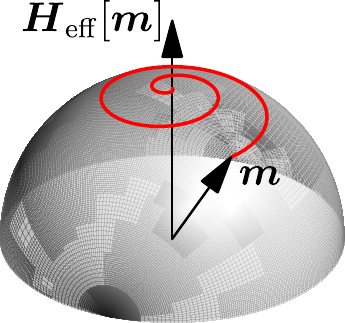}
\subcaption{LLG.}
\end{subfigure}
\quad
\begin{subfigure}{0.3\textwidth}
\centering
\includegraphics[height=2.7cm]{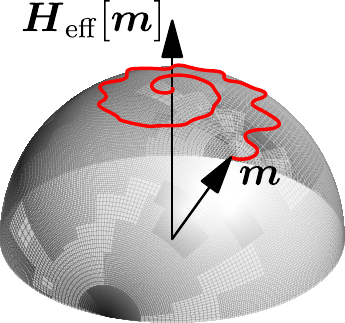}
\subcaption{iLLG.}
\end{subfigure}
\caption{
Schematic of the magnetization dynamics:
(a) LLG (precession and damping);
(b) iLLG (precession, damping, and nutation).}
\label{fig:llg-vs-illg}
\end{figure}
%%%%%%%%%%%%%%%%%%%%

In 1996, a pioneering experiment showed that,
using femtosecond laser excitations, it is possible to manipulate the magnetization
of a nickel sample at subpicosecond time scales~\cite{bmdb1996}.
This discovery gave impulse to several theoretical and experimental studies,
which gave rise to the field that nowadays is referred to as \emph{ultrafast magnetism}~\cite{wm2016}.

The standard LLG~\eqref{eq:LLG} is not capable to describe
the dynamics of the magnetization at such short time scales.
Based on the concept of angular momentum in magnetic spin systems,
a novel evolution equation has been recently proposed~\cite{crw2011}.
This equation, called \emph{inertial} LLG (iLLG), reads as
\begin{equation} \label{eq:LLG-new}
\mmt = - \gamma_0 \, \mm \times \Heff[\mm]
+ \alpha \, \mm \times \mmt
+ \tau \, \mm \times \mmtt.
\end{equation}
Here, $\tau>0$ denotes the angular momentum relaxation time (in~\si{\second}).
In addition to the classical precession and damping contributions,
the right-hand side of~\eqref{eq:LLG-new} comprises a third term involving the second time derivative of the magnetization.
It has been predicted that the effect of this additional contribution on the magnetization dynamics
consists in the appearance of nutation dynamics---superimposed magnetization oscillations occurring
at a frequency much higher than the one of the damped precession dynamics; see Figure~\ref{fig:llg-vs-illg}(b).
Such inertial dynamics has been experimentally observed
for the first time only very recently~\cite{inertia2020}.

%%%%%%%%%%%%%%%%%%%%
\subsection{Numerical approximation of LLG and wave map equation}
%%%%%%%%%%%%%%%%%%%%

This work is concerned with the numerical analysis of~\eqref{eq:LLG-new}.
As LLG has some similarities with
the harmonic map heat flow into the sphere~\cite{lw2008}
\begin{equation*}
\partial_{t} \uu - \Lapl\uu
= \abs{\Grad\uu}^2 \uu,
\end{equation*}
it turns out that iLLG is related to the wave map equation into the sphere~\cite{tataru2004}
\begin{equation} \label{eq:wavemap}
\partial_{tt} \uu - \Lapl\uu
= \big( \abs{\Grad\uu}^2 - \abs{\partial_t \uu}^2 \big) \uu.
\end{equation}
The numerical approximation of this class of partial differential equations (PDEs) poses several challenges:
nonuniqueness, possible blow-up in time, and low regularity of weak solutions,
geometric nonlinearities,
a nonconvex pointwise unit-length constraint,
intrinsic energy laws
as well as (for LLG)
the possible coupling with other PDEs, e.g., the Maxwell equations.

In the last twenty years,
several numerical integrators have been proposed.
Without claiming to be exhaustive and, in particular, restricting ourselves to the methods that are akin
to the ones proposed in the present work,
we refer to the works~\cite{bp2006,aj2006,bkp2008,alouges2008a,kw2018} for LLG,
and to~\cite{bfp2007,blp2009,bartels2009,kw2014,bartels2015a,bartels2016}
for the wave map equation.

%%%%%%%%%%%%%%%%%%%%
\subsection{Contributions and outline of the present work}
%%%%%%%%%%%%%%%%%%%%

In this work,
combining techniques developed for LLG and the wave map equation,
we introduce, analyze, and numerically compare two fully discrete numerical schemes for iLLG.
For both methods, the spatial discretization is based on first-order finite elements.
The first scheme (Algorithm~\ref{alg:tps}) is an extension of the tangent plane scheme proposed
for~\eqref{eq:LLG} in~\cite{alouges2008a}.
The scheme is based on an equivalent reformulation of~\eqref{eq:LLG-new} in the tangent space.
The unit-length constraint is enforced at the vertices of the mesh 
by projecting to the sphere the nodal values of the computed approximation at each time-step.
The second method (Algorithm~\ref{alg:mps})
extends to iLLG
the constraint-preserving angular momentum method proposed
for~\eqref{eq:wavemap} in~\cite{kw2014}.
Following~\cite{bartels2015a}, the spatial discretization
based on finite differences
considered in~\cite{kw2014}
is replaced
by a mass-lumped finite element approximation.
Since the resulting method leads to the solution of a nonlinear system of equations per time-step,
a linearization based on a convergent constraint-preserving fixed-point iteration---similar to those considered in~\cite{bp2006,bartels2015a}
for the methods proposed therein---is discussed and analyzed
(Algorithm~\ref{alg:mpsx}).

We study well-posedness and stability of the proposed schemes,
and determine sufficient conditions which guarantee that the algorithms
satisfy discrete energy laws resembling the one of the continuous problem
(see~\eqref{eq:decay-new} below).
Moreover,
we prove that they generate sequences of finite element solutions that,
upon extraction of a subsequence, converge towards a weak solution of the problem.
The proof is constructive and provides an alternative proof of existence of weak solutions to~\eqref{eq:LLG-new}
(first established in~\cite{ht2014}).
The numerical analysis of iLLG has been considered so far only in~\cite{mt2016},
where a semi-implicit method has been proposed and its conservation properties have been analyzed.
The present work thus proposes the first numerical schemes that are proven to be convergent towards
a weak solution of iLLG.

The remainder of the work is organized as follows:
We conclude this section by collecting some useful notation used throughout the paper.
In Section~\ref{sec:model}, we present the mathematical model under consideration in detail.
In Section~\ref{sec:algorithm}, we introduce the proposed numerical schemes
and state the main results of the work.
Section~\ref{sec:numerics} is devoted to numerical experiments.
Finally, in Section~\ref{sec:proof},
we collect the proofs of the results presented in the paper.

%%%%%%%%%%%%%%%%%%%%
\subsection{Notation} \label{sec:notation}
%%%%%%%%%%%%%%%%%%%%
We denote by $\N = \{ 1,2,\dots\}$ the set of natural numbers
and set $\N_0 := \N \cup \{ 0 \}$.
We denote the unit sphere by $\sphere = \{ x \in \R^3 : \abs{x} = 1 \}$.
We denote by $\{\ee_1, \ee_2, \ee_3\}$ the standard basis of $\R^3$.
For (spaces of) vector-valued or matrix-valued functions, we use bold letters,
e.g., for a generic domain $\Omega \subset \R^d$ ($d=2,3$),
we denote both $L^2(\Omega;\R^3)$ and $L^2(\Omega;\R^{3 \times 3})$ by $\LL^2(\Omega)$.
We denote by $\inner{\cdot}{\cdot}$
both the scalar product of $\LL^2(\Omega)$ and the duality pairing between $\HH^1(\Omega)$ and its dual,
with the ambiguity being resolved by the arguments.
The set of sphere-valued functions in $\HH^1(\Omega)$ is denoted by $H^1(\Omega; \sphere)$.
We also use the notation $\lesssim$ to denote
\emph{smaller than or equal to up to a multiplicative constant},
i.e., we write $A \lesssim B$ if there exists a constant $c>0$,
which is clear from the context and always independent of the discretization parameters,
such that $A \leq c B$.
Finally, we write $A \simeq B$ if $A \lesssim B$ and $B \lesssim A$ hold simultaneously.

%%%%%%%%%%%%%%%%%%%%
\section{Mathematical model} \label{sec:model}
%%%%%%%%%%%%%%%%%%%%

Let $\Omega\subset\R^d$ ($d=2,3$) be a bounded Lipschitz domain.
The energy of $\mm \in H^1(\Omega ; \sphere)$ is described by the Dirichlet energy functional
\begin{equation} \label{eq:llg:energy}
\E[\mm]
=
\frac{1}{2} \norm[\LL^2(\Omega)]{\Grad\mm}^2.
\end{equation}
Minimizers $\mm \in H^1(\Omega ; \sphere)$ of~\eqref{eq:llg:energy}
satisfy the Euler--Lagrange equations
\begin{equation*}
\dual{\heff[\mm]}{\pphi} = 0
\quad
\text{for all } \pphi \in \HH^1(\Omega) \text{ such that } \mm\cdot\pphi = 0 \text{ a.e.\ in } \Omega,
\end{equation*}
which, in strong form, take the form
\begin{subequations} \label{eq:brown}
\begin{alignat}{2}
\label{eq:brown1}
\mm \times \heff[\mm] &= \0
&\quad& \text{in } \Omega,\\
\label{eq:bc}
\partial_{\nnu}\mm &= \0
&\quad& \text{on } \partial\Omega,
\end{alignat}
\end{subequations}
where $\nnu: \partial \Omega \to \sphere$ denote the outward-pointing unit normal vector to $\partial\Omega$.
Here, $\heff[\mm]$ is defined as the opposite of the G\^{a}teaux derivative of the energy, i.e.,
\begin{equation} \label{eq:heff}
- \dual{\heff[\mm]}{\pphi}
= \Big\langle\frac{\delta\E[\mm]}{\delta \mm} , \pphi \Big\rangle
\stackrel{\eqref{eq:llg:energy}}{=} \inner{\Grad\mm}{\Grad\pphi}.
\end{equation}
Nonequilibrium magnetization configurations $\mm(t) \in H^1(\Omega ; \sphere)$
evolve according to LLG (see~\eqref{eq:LLG}), which in rescaled form reads as
\begin{equation} \label{eq:llg}
\mmt = - \mm \times ( \heff[\mm] - \alpha \, \mmt)
\quad
\quad \text{in } \Omega,
\end{equation}
where $\alpha>0$.
Note that stationary solutions to~\eqref{eq:llg} satisfy~\eqref{eq:brown1}.
A simple formal computation reveals the orthogonality
$\mm\cdot\mmt = 0$,
from which it follows that the dynamics inherently preserves the unit-length constraint.
Moreover, taking the scalar product of~\eqref{eq:llg} with $\heff[\mm] - \alpha \, \mmt$,
one can show that any sufficiently smooth solution of~\eqref{eq:llg} satisfies the energy law
\begin{equation} \label{eq:decay}
\ddt\E[\mm(t)] = - \alpha \norm[\LL^2(\Omega)]{\mmt(t)}^2 \leq 0
\quad
\text{for all } t > 0.
\end{equation}
Hence, the dynamics is dissipative with the dissipation being modulated
by the parameter $\alpha$.

Inertial effects can be included in the model by adding a term on the right-hand side of~\eqref{eq:llg} (see~\eqref{eq:LLG-new}).
The resulting equation, iLLG, is given by
\begin{equation} \label{eq:llg-new}
\mmt = - \mm \times ( \heff[\mm] - \alpha \, \mmt - \tau \, \mmtt)
\quad
\quad \text{in } \Omega,
\end{equation}
where $\tau > 0$.
Let $\vv := \mmt$ and $\ww := \mm \times \mmt = \mm \times \vv$.
Using the jargon of kinematics, we refer to these two quantities as
\emph{linear velocity} and \emph{angular momentum}, respectively.
By construction, $\mm$, $\vv$, and $\ww$ are mutually orthogonal.
Moreover, since $\abs{\mm}=1$ and $\mm\cdot\vv = 0$, it holds that $\abs{\vv} = \abs{\ww}$.
The same computation leading to~\eqref{eq:decay}
yields the energy law of iLLG:
\begin{equation} \label{eq:decay-new}
\ddt \left(
\E[\mm(t)] + \frac{\tau}{2} \norm[\LL^2(\Omega)]{\mmt(t)}^2
\right)
= - \alpha \norm[\LL^2(\Omega)]{\mmt(t)}^2 \leq 0
\quad
\text{for all } t > 0.
\end{equation}
This motivates the definition of the extended energy functional
\begin{equation} \label{eq:functional}
\J(\mm,\uu)
= \E[\mm]
+ \frac{\tau}{2} \norm[\LL^2(\Omega)]{\uu}^2
\quad
\text{for all }
\mm \in H^1(\Omega ; \sphere) \text{ and } \uu \in \LL^2(\Omega).
\end{equation}
With this definition,
the quantity decaying over time
in the dynamics governed by~\eqref{eq:llg-new}
becomes $\J(\mm,\mmt)$.
Staying within the framework of kinematics,
we can interpret $\J(\mm,\mmt)$ as the \emph{total energy} of the magnetization,
which comprises the \emph{potential energy} $\E[\mm]$
and the \emph{kinetic energy} $\tau\norm[\LL^2(\Omega)]{\mmt}^2/2$.

The initial boundary value problem considered in this work
consists of~\eqref{eq:llg-new}
supplemented with
homogeneous Neumann boundary conditions~\eqref{eq:bc},
which make the dynamic problem compatible with the stationary case~\eqref{eq:brown},
and suitable initial conditions
$\mm(0) = \mm^0$
and $\mmt(0) = \vv^0$.

We conclude this section by presenting the definition of a weak solution of~\eqref{eq:llg-new},
which is obtained by extending~\cite[Definition~1.2]{as1992} to the present setting;
see also~\cite{ht2014}.

%%%%%%%%%%%%%%%%%%%%
\begin{definition} \label{def:weak}
Let $\mm^0 \in H^1(\Omega ; \sphere)$ and $\vv^0 \in \LL^2(\Omega)$
such that $\mm^0\cdot\vv^0=0$ a.e.\ in $\Omega$.
A vector field $\mm:\Omega \times (0,\infty) \to \sphere$ is called
a \emph{global weak solution} of iLLG~\eqref{eq:llg-new}
if $\mm \in L^{\infty}(0,\infty; \HH^1(\Omega)) \cap W^{1,\infty}(0,\infty; \LL^2(\Omega))$
and, for all $T>0$, the following properties are satisfied:
\begin{itemize}
\item[\rm(i)]  $\mm \in \HH^1(\Omega_T)$, where $\Omega_T := \Omega \times (0,T)$;
\item[\rm(ii)] $\mm(t) \to \mm^0$ in $\HH^1(\Omega)$ and $\mmt(t) \to \vv^0$ in $\LL^2(\Omega)$
as $t \to 0$;
\item[\rm(iii)] For all $\vvphi\in C^{\infty}_c([0,T);\HH^1(\Omega))$, it holds that
\begin{equation} \label{eq:weak:variational}
\begin{split}
& \int_0^T \inner{\mmt(t)}{\vvphi(t)} \, \dt \\
& \quad = - \int_0^T \inner{\heff[\mm(t)]}{\vvphi(t) \times \mm(t)} \, \dt
+ \alpha \int_0^T \inner{\mm(t)\times\mmt(t)}{\vvphi(t)} \, \dt \\
& \qquad - \tau \int_0^T \inner{\mm(t)\times\mmt(t)}{\de_t\vvphi(t)} \, \dt
- \tau \inner{\mm^0\times\vv^0}{\vvphi(0)};
\end{split}
\end{equation}
\item[\rm(iv)] It holds that
\begin{equation} \label{eq:weak:energy}
\J(\mm(T),\mmt(T)) + \alpha \int_0^T \norm[\LL^2(\Omega)]{\mmt(t)}^2 \dt
\leq \J(\mm^0,\vv^0).
\end{equation}
\end{itemize}
\end{definition}
%%%%%%%%%%%%%%%%%%%%

In Definition~\ref{def:weak},
\eqref{eq:weak:variational} comes from a variational formulation of~\eqref{eq:llg-new}
in the space-time cylinder $\Omega_T$,
where we integrate by parts in time the inertial contribution
in order to lower the requested regularity in time of $\mm$.
The energy inequality~\eqref{eq:weak:energy} is the weak counterpart of~\eqref{eq:decay-new}.

%%%%%%%%%%%%%%%%%%%%
\begin{remark}
{\rm(i)}
The setting discussed in this section can be obtained from the original equations
expressed in physical units
after a suitable rescaling.
Let $t$ and $x$ denote the time and spatial variables (measured in \si{\second} and \si{\meter}, respectively).
First, we perform the change of variables
$t' = \gamma_0 \Ms t$ and $x' = x / \ell_{\mathrm{ex}}$,
where $\Ms>0$ and $\ell_{\mathrm{ex}}>0$ denote the saturation magnetization (in~\si{\ampere\per\meter})
and the exchange length (in~\si{\meter}) of the material, respectively.
Second, we rescale the energy $\E[\mm]$ (in~\si{\joule}),
the effective field $\Heff[\mm]$ (in~\si{\ampere\per\meter}),
and the angular momentum relaxation time $\tau$ (in~\si{\second}) in the following way:
$\E'[\mm] = \E[\mm] / (\mu_0 \Ms^2 \ell_{\mathrm{ex}}^3)$,
$\heff[\mm] = \Heff[\mm]/ \Ms$,
$\tau' = \gamma_0 \Ms \tau$.
Then, using the chain rule, \eqref{eq:LLG-new} can be rewritten as~\eqref{eq:llg-new},
where all `primes' are omitted from the rescaled quantities in order to simplify the notation.\\
{\rm(ii)}
For ease of presentation, in the micromagnetic energy functional~\eqref{eq:llg:energy},
we consider only the leading-order exchange contribution.
The numerical treatment of standard lower-order energy contributions
(e.g., magnetocrystalline anisotropy, Zeeman energy, magnetostatic energy, Dzyaloshinskii--Moriya interaction)
is well understood; see, e.g.,
\cite{bffgpprs2014,prs2018,dpprs2019,hpprss2019}.
\end{remark}
%%%%%%%%%%%%%%%%%%%%

%%%%%%%%%%%%%%%%%%%%
\section{Numerical algorithms and main results} \label{sec:algorithm}
%%%%%%%%%%%%%%%%%%%%

In this section, we introduce two fully discrete algorithms for the numerical approximation of iLLG
and we state the corresponding stability and convergence results.

%%%%%%%%%%%%%%%%%%%%
\subsection{Preliminaries}
%%%%%%%%%%%%%%%%%%%%

For the time discretization, 
we consider a uniform partition of the positive real axis $(0,\infty)$ 
with time-step size $k>0$, i.e., $t_i := ik$ for all $i \in \N_0$.
Given a sequence $\{ \phi^i\}_{i \in \N_0}$,
for all $i \in \N_0$, we define
$d_t \phi^{i+1} := (\phi^{i+1} - \phi^i)/k$
and $\phi^{i+1/2} := (\phi^{i+1} + \phi^i)/2$.
Interpreting the sequence $\{ \phi^i\}_{i \in \N_0}$ as a collection of snapshots of a time-dependent function,
we consider the time reconstructions $\phi_{k}$, $\phi^-_{k}$, $\overline{\phi}_{k}$, $\phi^+_{k}$
defined, for all $i \in \N_0$ and $t \in [t_i,t_{i+1})$, as
\begin{equation} \label{eq:reconstructions}
\begin{split}
& \phi_{k}(t) := \frac{t-t_i}{k}\phi^{i+1} + \frac{t_{i+1} - t}{k}\phi^i,
\quad
\phi_{k}^-(t) := \phi^i,
\quad
\overline{\phi}_{k}(t) := \phi^{i+1/2},
\quad
\text{and}
\quad
\phi_{k}^+(t) := \phi^{i+1}.
\end{split}
\end{equation}
Note that $\partial_t \phi_k (t) = d_t \phi^{i+1}$ for all $i \in \N_0$ and $t \in [t_i,t_{i+1})$.

For the spatial discretization, we assume $\Omega$ to be a polytopal domain with Lipschitz boundary
and consider a shape-regular family $\{ \T_h \}_{h>0}$ of tetrahedral meshes of $\Omega$ 
parametrized by the mesh size $h = \max_{K \in \T_h} \diam(K)$.
Moreover, let $\hmin = \min_{K \in \T_h} \diam(K)$.
We denote by $\Nh$ the set of vertices of $\T_h$.
For any $K \in \T_h$, let $\mathcal{P}^1(K)$ be the space of first-order polynomials on $K$.
We denote by $\S^1(\T_h)$ the space of piecewise affine and globally continuous functions
from $\Omega$ to $\R$, i.e.,
$\S^1(\T_h) = \left\{v_h \in C^0(\overline{\Omega}): v_h \vert_K \in \mathcal{P}^1(K) \text{ for all } K \in \T_h \right\}$.
Its classical basis is given
by the set of the nodal hat functions $\left\{\vphi_{z}\right\}_{z\in\Nh}$,
which satisfy $\vphi_{z}(z')=\delta_{z,z'}$ for all $z,z'\in\Nh$.
Let $\interp: C^0(\overline{\Omega}) \to \S^1(\T_h)$
denote the nodal interpolant 
defined by $\interp[v] = \sum_{z \in \Nh} v(z) \vphi_{z}$ for all $v \in C^0(\overline{\Omega})$.
We denote by $\Interp: \CC^0(\overline{\Omega}) \to \S^1(\T_h)^3$ its vector-valued counterpart.
We consider the mass-lumped product $\inner[h]{\cdot}{\cdot}$ defined by
\begin{equation} \label{eq:mass-lumping}
\inner[h]{\ppsi}{\pphi}
= \int_\Omega \interp[\ppsi \cdot \pphi]
\quad \text{for all } \ppsi, \pphi \in \CC^0(\overline{\Omega}).
\end{equation}
Moreover, we define the mapping $\Ph : \HH^1(\Omega)^{\star} \to \S^1(\T_h)^3$ by
\begin{equation} \label{eq:pseudo-projection}
\inner[h]{\Ph \uu}{\pphi_h}
= \dual{\uu}{\pphi_h}
\quad \text{for all } \uu \in \HH^1(\Omega)^{\star} \text{ and } \pphi_h \in \S^1(\T_h)^3.
\end{equation}
Finally, we say that a mesh satisfies the \emph{angle condition}
if all off-diagonal entries of the so-called stiffness matrix are nonpositive, i.e.,
\begin{equation} \label{eq:angleCondition}
\inner{\grad\vphi_{z}}{\grad\vphi_{z'}} \leq 0
\quad \text{for all } z, z' \in \Nh \text{ with } z \neq z'.
\end{equation}
A sufficient condition for~\eqref{eq:angleCondition} to hold in 3D is that the measure
of all dihedral angles of all tetrahedra of the mesh is smaller than or equal to $\pi/2$~\cite{bartels2005}.

%%%%%%%%%%%%%%%%%%%%
\subsection{Numerical algorithms}
%%%%%%%%%%%%%%%%%%%%

In the following algorithms,
the main identities
satisfied by any solution $\mm$ of LLG/iLLG, i.e.,
$\abs{\mm} = 1$ and $\mm\cdot\mmt = 0$,
are imposed only at the vertices of the mesh $\T_h$.
To this end,
we define the \emph{set of admissible discrete magnetizations}
\begin{equation*}
\Mh := \left\{\pphi_h \in \S^1(\T_h)^3: \abs{\pphi_h(z)}=1 \text{ for all } z \in \Nh \right\}
\end{equation*}
and, for $\ppsi_h \in \S^1(\T_h)^3$,
the \emph{discrete tangent space} of $\ppsi_h$
\begin{equation} \label{eq:discreteTangentSpace}
\Kh[\ppsi_h]
:= \left\{
\pphi_h \in \S^1(\T_h)^3 : \ppsi_h(z) \cdot \pphi_h(z) = 0 \text{ for all } z \in \Nh
\right\}.
\end{equation}
The discrete counterpart of the functional~\eqref{eq:functional}
is defined by
\begin{equation*}
\J_h(\mm_h,\uu_h)
= \E[\mm_h]
+ \frac{\tau}{2} \norm[h]{\uu_h}^2
\quad
\text{for all }
\mm_h, \uu_h \in \S^1(\T_h)^3.
\end{equation*}

%%%%%%%%%%%%%%%%%%%%
\subsubsection{Tangent plane scheme}
%%%%%%%%%%%%%%%%%%%%

The first method uses the linear velocity $\vv = \partial_t \mm$ as an auxiliary variable.
Using the vector identity
\begin{equation} \label{eq:triple}
\vec{a}\times(\vec{b}\times\vec{c})=(\vec{a}\cdot\vec{c})\vec{b} - (\vec{a}\cdot\vec{b})\vec{c} \quad \text{for all } \vec{a},\vec{b},\vec{c} \in\R^3,
\end{equation}
together with the properties $\abs{\mm} =1$ and $\mm\cdot\vv = 0$,
iLLG can be formally rewritten as
\begin{equation} \label{eq:llg-alternative}
\tau \,\vvt
+ \alpha \, \vv
+ \mm\times\vv
= \heff[\mm]
- (\heff[\mm]\cdot\mm)\mm
- \tau \abs{\vv}^2 \mm.
\end{equation}
Following the tangent plane paradigm~\cite{aj2006,bkp2008,alouges2008a,akst2014},
to obtain a numerical scheme for iLLG,
we consider a finite element approximation of a mass-lumped variational formulation of~\eqref{eq:llg-alternative}
based on test functions fulfilling the same orthogonality property satisfied by $\vv$.
This yields a natural linearization of~\eqref{eq:llg-alternative},
as the contributions associated with last two (nonlinear) terms on the right-hand side
vanish by orthogonality.
More precisely, for all time-steps $i \in \N_0$,
given the current approximations $\mm_h^i \approx \mm(t_i)$ and $\vv_h^i \approx \vv(t_i)$,
we compute $\vv_h^{i+1} \approx \vv(t_{i+1})$
using a discretized version of~\eqref{eq:llg-alternative},
which is
based on the discrete tangent space $\Kh[\mm_h^i]$ introduced in~\eqref{eq:discreteTangentSpace}
for the spatial discretization
and on the backward Euler method
for the temporal discretization.
Then, using the available approximations $\mm_h^i$ and $\vv_h^{i+1}$,
we obtain $\mm_h^{i+1} \approx \mm(t_{i+1})$ via a first-order time-stepping,
i.e., $\Interp\big[(\mm_h^i + k \vv_h^{i+1})/\abs{\mm_h^i + k \vv_h^{i+1}}\big]$,
where the nodal projection is employed to ensure that the new approximation belongs to $\Mh$.
Unlike~\cite{aj2006,alouges2008a,akst2014} and as in~\cite{bkp2008},
we use the mass-lumped product $\inner[h]{\cdot}{\cdot}$,
which enhances the efficiency of the scheme without affecting its formal convergence order.
The resulting scheme is summarized in the following algorithm.

%%%%%%%%%%%%%%%%%%%%
\begin{algorithm}[tangent plane scheme] \label{alg:tps}
\emph{Input:}
$\mm_h^0 \in \Mh$
and
$\vv_h^0 \in \Kh[\mm_h^0]$.\\
\emph{Loop:}
For all $i \in \N_0$, iterate {\rm(i)--(ii)}:
\begin{itemize}
\item[\rm(i)] Compute $\vv_h^{i+1} \in \Kh[\mm_h^i]$ such that,
for all $\pphi_h \in \Kh[\mm_h^i]$, it holds that
\begin{equation} \label{eq:tps1}
\begin{split}
& \tau \inner[h]{d_t\vv_h^{i+1}}{\pphi_h}
+ \alpha \inner[h]{\vv_h^{i+1}}{\pphi_h}
+ \inner[h]{\mm_h^i\times\vv_h^{i+1}}{\pphi_h}
- k \inner[h]{\Ph\heff[\vv_h^{i+1}]}{\pphi_h} \\
& \quad = \inner[h]{\Ph\heff[\mm_h^i]}{\pphi_h}.
\end{split}
\end{equation}
\item[\rm(ii)] Define $\mm_h^{i+1} = \Interp\big[(\mm_h^i + k \vv_h^{i+1})/\abs{\mm_h^i + k \vv_h^{i+1}}\big] \in \Mh$.
\end{itemize}
\emph{Output:}
Sequence of approximations $\left\{(\mm_h^{i+1},\vv_h^{i+1})\right\}_{i \in \N_0}$.
\end{algorithm}
%%%%%%%%%%%%%%%%%%%%

Algorithm~\ref{alg:tps} is well-defined:
The (nonsymmetric) bilinear form on the left-hand side of~\eqref{eq:tps1} in step~{\rm(i)} is elliptic,
so that existence and uniqueness of a solution $\vv_h^{i+1} \in \Kh[\mm_h^i]$
are guaranteed by the Lax--Milgram theorem;
In the nodal projection appearing in step~{\rm(ii)},
the denominator is bounded from below by 1,
so that division by zero never occurs.

%%%%%%%%%%%%%%%%%%%%
\subsubsection{Angular momentum method}
%%%%%%%%%%%%%%%%%%%%

The second method, following~\cite{kw2014,bartels2015a}, uses the angular momentum
$\ww = \mm \times \partial_t \mm$ as an auxiliary variable.
First,
note that
\begin{equation*}
\mm \times \ww = \mm \times (\mm \times \mmt) = - \mmt,
\end{equation*}
where the second identity follows from~\eqref{eq:triple},
together with $\abs{\mm} =1$ and $\mm\cdot\mmt = 0$.
Then,
we have that
\begin{equation*}
\tau \, \wwt
=
\tau \, \mm \times \mmtt
\stackrel{\eqref{eq:llg-new}}{=}
\mm \times \heff[\mm] - \alpha \, \mm \times \mmt + \mmt.
\end{equation*}
We infer the first-order (in time) system
\begin{align*}
\mmt &= - \mm \times \ww, \\
\tau \, \wwt &= \mm \times \heff[\mm] - \alpha \, \mm \times \mmt - \mm \times \ww.
\end{align*}
For all $i \in \N_0$,
given $\mm_h^i \approx \mm(t_i)$ and $\ww_h^i \approx \ww(t_i)$,
approximations $\mm_h^{i+1} \approx \mm(t_{i+1})$ and $\ww_h^{i+1} \approx \ww(t_{i+1})$
are computed by solving a mass-lumped variational formulation
of this first-order system,
where the time discretization is based on the midpoint rule.
The resulting scheme is stated in the following algorithm.

%%%%%%%%%%%%%%%%%%%%
\begin{algorithm}[nonlinear angular momentum method] \label{alg:mps}
\emph{Input:}
$\mm_h^0 \in \Mh$
and
$\vv_h^0 \in \Kh[\mm_h^0]$.\\
\emph{Initialization:}
Define $\ww_h^0 = \Interp\big[\mm_h^0 \times \vv_h^0\big] \in \Kh[\mm_h^0]$.\\
\emph{Loop:}
For all $i \in \N_0$, compute $(\mm_h^{i+1} ,  \ww_h^{i+1}) \in \Mh \times \Kh[\mm_h^{i+1}]$ such that,
for all $(\pphi_h , \ppsi_h) \in \S^1(\T_h)^3 \times \S^1(\T_h)^3$, it holds that
\begin{subequations} \label{eq:mps}
\begin{align}
\label{eq:mps1}
\inner[h]{d_t\mm_h^{i+1}}{\pphi_h}
& =
- \inner[h]{\mm_h^{i+1/2} \times \ww_h^{i+1/2}}{\pphi_h}, \\
\tau \inner[h]{d_t\ww_h^{i+1}}{\ppsi_h}
& =
\inner[h]{\mm_h^{i+1/2} \times \Ph\heff[\mm_h^{i+1/2}]}{\ppsi_h} \notag \\
& \qquad - \alpha \inner[h]{\mm_h^{i+1/2} \times d_t\mm_h^{i+1} }{\ppsi_h}
- \inner[h]{\mm_h^{i+1/2} \times \ww_h^{i+1/2}}{\ppsi_h}.
\label{eq:mps2}
\end{align}
\end{subequations}
\emph{Output:}
Sequence of approximations $\left\{(\mm_h^{i+1},\ww_h^{i+1})\right\}_{i \in \N_0}$.
\end{algorithm}
%%%%%%%%%%%%%%%%%%%%

In the following proposition,
we show that
the pointwise constraints
$\abs{\mm}=1$ and $\mm\cdot\ww = 0$
are inherently preserved 
by Algorithm~\ref{alg:mps}
(at the vertices of the mesh).
Its proof is deferred to Section~\ref{sec:solvability}.

%%%%%%%%%%%%%%%%%%%%
\begin{proposition} \label{prop:conservation}
Let $i \in \N_0$.
The approximations generated by Algorithm~\ref{alg:mps} satisfy
$\mm_h^{i+1} \in \Mh$
and 
$\ww_h^{i+1} \in \Kh[\mm_h^{i+1}]$.
\end{proposition}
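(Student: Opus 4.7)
The plan is to argue by induction on $i \in \N_0$ and to exploit the key structural feature of the scheme: the mass-lumped product reduces to $\inner[h]{\ppsi}{\pphi} = \sum_{z\in\Nh} \beta_z \, \ppsi(z) \cdot \pphi(z)$ with $\beta_z := \int_\Omega \vphi_z > 0$, so test functions of the form $\vphi_z \, \vec{a}$ (with $\vec{a} \in \R^3$) isolate identities at a single vertex $z$. The base case is immediate from $\ww_h^0 = \Interp[\mm_h^0 \times \vv_h^0]$: at every vertex $z$, $\abs{\mm_h^0(z)}=1$ by assumption and $\mm_h^0(z)\cdot\ww_h^0(z) = \mm_h^0(z)\cdot(\mm_h^0(z)\times\vv_h^0(z)) = 0$.

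For the length-preservation step, I would test \eqref{eq:mps1} at a fixed vertex $z$ with $\pphi_h = \vphi_z \, \mm_h^{i+1/2}(z)$. Since the right-hand side features the cross product $\mm_h^{i+1/2}(z)\times\ww_h^{i+1/2}(z)$, orthogonality makes it vanish upon contraction with $\mm_h^{i+1/2}(z)$, leaving $d_t\mm_h^{i+1}(z)\cdot\mm_h^{i+1/2}(z)=0$. The algebraic identity $2k\,d_t\mm^{i+1}\cdot\mm^{i+1/2} = \abs{\mm^{i+1}}^2 - \abs{\mm^i}^2$ then propagates $\abs{\mm_h^i(z)}=1$ to $\abs{\mm_h^{i+1}(z)}=1$, yielding $\mm_h^{i+1}\in\Mh$.

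For the orthogonality step, I would next test \eqref{eq:mps1} with $\pphi_h = \vphi_z \, \ww_h^{i+1/2}(z)$ and \eqref{eq:mps2} with $\ppsi_h = \vphi_z \, \mm_h^{i+1/2}(z)$. Every term on the right-hand sides involves a cross product with either $\mm_h^{i+1/2}(z)$ or $\ww_h^{i+1/2}(z)$ and therefore vanishes upon contraction with the corresponding vector, yielding
\begin{equation*}
d_t\mm_h^{i+1}(z)\cdot\ww_h^{i+1/2}(z) = 0
\quad\text{and}\quad
d_t\ww_h^{i+1}(z)\cdot\mm_h^{i+1/2}(z) = 0.
\end{equation*}
Summing these and invoking the exact discrete Leibniz identity enjoyed by midpoint quantities, $d_t(\mm_h\cdot\ww_h)^{i+1} = d_t\mm_h^{i+1}\cdot\ww_h^{i+1/2} + \mm_h^{i+1/2}\cdot d_t\ww_h^{i+1}$, propagates $\mm_h^i(z)\cdot\ww_h^i(z)=0$ to $\mm_h^{i+1}(z)\cdot\ww_h^{i+1}(z)=0$, closing the induction and showing $\ww_h^{i+1}\in\Kh[\mm_h^{i+1}]$.

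I do not anticipate a substantive obstacle: all the ingredients are by design of the scheme. Mass-lumping localizes the variational identities at single vertices, antisymmetry of the cross product collapses the right-hand sides, and the midpoint-in-time discretization supplies the exact discrete Leibniz rule needed to convert the nodal orthogonalities into invariance of $\mm_h\cdot\ww_h$. The only delicate point worth emphasizing is that the orthogonality step uses \eqref{eq:mps1} and \eqref{eq:mps2} simultaneously, so the two equations of the coupled system must be handled jointly; an analogous argument with a non-symmetric time discretization (e.g., backward Euler) would not produce the clean telescoping that makes the pointwise invariants exactly preserved.
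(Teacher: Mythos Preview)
Your proposal is correct and follows essentially the same approach as the paper: the paper also tests~\eqref{eq:mps1} with $\vphi_z\,\mm_h^{i+1/2}(z)$ for length preservation, and then tests~\eqref{eq:mps1} with $\vphi_z\,\ww_h^{i+1/2}(z)$ and~\eqref{eq:mps2} with $\vphi_z\,\mm_h^{i+1/2}(z)$, combining the two via the discrete Leibniz identity to propagate the nodal orthogonality. Your write-up is slightly more explicit about the role of mass-lumping and the inductive structure, but the argument is the same.
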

%%%%%%%%%%%%%%%%%%%%

The computation of $(\mm_h^{i+1} ,  \ww_h^{i+1})$ satisfying~\eqref{eq:mps}
involves the solution of a nonlinear system of equations.
An effective implementation requires a linearization.

Let $\uu_h^i := \mm_h^{i+1/2}$ and $\zz_h^i := \ww_h^{i+1/2}$.
Performing simple algebraic manipulations,
we rewrite~\eqref{eq:mps} with respect to the unknowns $\uu_h^i$ and $\zz_h^i$:
\begin{subequations} \label{eq:mpsx}
\begin{align}
2\inner[h]{\uu_h^i}{\pphi_h}
+ k \inner[h]{\uu_h^i \times \zz_h^i}{\pphi_h}
& =
2\inner[h]{\mm_h^i}{\pphi_h}, \\
2 \tau \inner[h]{\zz_h^i}{\ppsi_h}
- k \inner[h]{\uu_h^i \times \Ph\heff[\uu_h^i]}{\ppsi_h} \qquad \notag \\
- 2 \alpha \inner[h]{\uu_h^i \times \mm_h^i}{\ppsi_h}
+ k \inner[h]{\uu_h^i \times \zz_h^i}{\ppsi_h}
& =
2 \tau \inner[h]{\ww_h^i}{\ppsi_h}.
\end{align}
\end{subequations}
Starting from this formulation,
in the following algorithm
we introduce a linear fixed-point iteration
(similar in spirit to those considered in~\cite{bp2006,bartels2015a}),
which provides an effective implementation of Algorithm~\ref{alg:mps}.

%%%%%%%%%%%%%%%%%%%%
\begin{algorithm}[linearized angular momentum method] \label{alg:mpsx}
\emph{Input:}
$\mm_h^0 \in \Mh$
and
$\vv_h^0 \in \Kh[\mm_h^0]$.\\
\emph{Initialization:}
Define $\ww_h^0 = \Interp\big[\mm_h^0 \times \vv_h^0\big] \in \Kh[\mm_h^0]$.\\
\emph{Loop:}
For all $i \in \N_0$, iterate {\rm(i)--(ii)}:
\begin{itemize}
\item[\rm(i)]
Let $\uu_h^{i,0} = \mm_h^i$ and $\zz_h^{i,0} = \ww_h^i$.
For all $\ell \in \N_0$, iterate {\rm(i-a)--(i-b)}:
\begin{itemize}
\item[\rm(i-a)]
Compute $\uu_h^{i,\ell+1} \in \S^1(\T_h)^3$ such that,
for all $\pphi_h \in \S^1(\T_h)^3$, it holds that
\begin{equation} \label{eq:mpsx1}
2\inner[h]{\uu_h^{i,\ell+1}}{\pphi_h}
+ k \inner[h]{\uu_h^{i,\ell+1} \times \zz_h^{i,\ell}}{\pphi_h}
=
2\inner[h]{\mm_h^i}{\pphi_h};
\end{equation}
\item[\rm(i-b)]
Compute $\zz_h^{i,\ell+1} \in \S^1(\T_h)^3$ such that,
for all $\ppsi_h \in \S^1(\T_h)^3$, it holds that
\begin{equation} \label{eq:mpsx2}
\begin{split}
& 2 \tau \inner[h]{\zz_h^{i,\ell+1}}{\ppsi_h}
+ k \inner[h]{\uu_h^{i,\ell+1} \times \zz_h^{i,\ell+1}}{\ppsi_h} \\
& \quad =
k \inner[h]{\uu_h^{i,\ell+1} \times \Ph\heff[\uu_h^{i,\ell+1}]}{\ppsi_h}
+ 2 \alpha \inner[h]{\uu_h^{i,\ell+1} \times \mm_h^i}{\ppsi_h}
+ 2 \tau \inner[h]{\ww_h^i}{\ppsi_h};
\end{split}
\end{equation}
\end{itemize}
until
\begin{equation} \label{eq:mpsx_stopping}
\norm[h]{\uu_h^{i,\ell+1} - \uu_h^{i,\ell}}
+ \norm[h]{\zz_h^{i,\ell+1} - \zz_h^{i,\ell}} \le \eps.
\end{equation}
\item[\rm(ii)]
Let $\ell_i \in \N_0$ be the smallest integer for which the stopping criterion~\eqref{eq:mpsx_stopping} is met.
Define $\mm_h^{i+1} := 2 \uu_h^{i,\ell_i+1} - \mm_h^i$
and $\ww_h^{i+1} := 2 \zz_h^{i,\ell_i+1} - \ww_h^i$.
\end{itemize}
\emph{Output:}
Sequence of approximations $\left\{(\mm_h^{i+1},\ww_h^{i+1})\right\}_{i \in \N_0}$.
\end{algorithm}
%%%%%%%%%%%%%%%%%%%%

The well-posedness and the conservation properties of Algorithm~\ref{alg:mpsx}
are the subject of the following proposition.
Its proof is postponed to Section~\ref{sec:solvability}.

%%%%%%%%%%%%%%%%%%%%
\begin{proposition} \label{prop:fixed-point}
Let $i \in \N_0$.
Suppose that $\mm_h^i \in \Mh$.\\
{\rm(i)}
For all $\ell \in \N_0$, \eqref{eq:mpsx1} and~\eqref{eq:mpsx2} admit unique solutions
$\uu_h^{i,\ell+1}$ and $\zz_h^{i,\ell+1}$ in $\S^1(\T_h)^3$. 
Moreover, it holds that $\norm[\LL^{\infty}(\Omega)]{\uu_h^{i,\ell+1}} \leq 1$.\\
{\rm(ii)}
There exist $k_0>0$ and $C>0$
such that,
if $k < k_0$ and $k < C \hmin$,
then,
for all $\ell \in \N_0$,
it holds that
\begin{equation} \label{eq:contraction} 
\norm[h]{\uu_h^{i,\ell+2}-\uu_h^{i,\ell+1}}
+
\norm[h]{\zz_h^{i,\ell+2}-\zz_h^{i,\ell+1}}
\le
q
\big(
\norm[h]{\uu_h^{i,\ell+1} - \uu_h^{i,\ell}}
+ \norm[h]{\zz_h^{i,\ell+1} - \zz_h^{i,\ell}}
\big)
\end{equation}
for some $0 < q < 1$.
The constants $k_0$, $C$, and $q$ depend only on the shape-regularity of $\T_h$ and the problem data.\\
{\rm(iii)}
Under the assumptions of part~{\rm(ii)},
the stopping criterion~\eqref{eq:mpsx_stopping} is met in a finite number of iterations.
If $\ell_i \in \N_0$ denotes the smallest integer for which~\eqref{eq:mpsx_stopping} holds,
the new approximations $\mm_h^{i+1} = 2 \uu_h^{i,\ell_i+1} - \mm_h^i$
and $\ww_h^{i+1} = 2 \zz_h^{i,\ell_i+1} - \ww_h^i$
belong to $\Mh$ and $\Kh[\mm_h^{i+1}]$, respectively.
\end{proposition}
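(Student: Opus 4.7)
The plan is to verify the Lax--Milgram hypotheses on $\S^1(\T_h)^3$ equipped with the mass-lumped scalar product $\inner[h]{\cdot}{\cdot}$ (which is equivalent to the $\LL^2$-product by standard norm equivalence on finite element spaces). For both \eqref{eq:mpsx1} and \eqref{eq:mpsx2}, the cross-product term is the only potentially troublesome contribution, but the nodal identity $\vec a \cdot (\vec a \times \vec b) = 0$ combined with mass-lumping yields $\inner[h]{\vec a \times \vec b}{\vec a} = 0$ for all $\vec a, \vec b \in \S^1(\T_h)^3$. Hence the bilinear forms reduce on the diagonal to $2\norm[h]{\cdot}^2$ and $2\tau\norm[h]{\cdot}^2$, respectively, which are coercive, and existence and uniqueness follow. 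For the pointwise bound, the mass-lumping localizes \eqref{eq:mpsx1} to the nodal identity $2\,\uu_h^{i,\ell+1}(z) + k\,\uu_h^{i,\ell+1}(z) \times \zz_h^{i,\ell}(z) = 2\,\mm_h^i(z)$ at every $z \in \Nh$. Dotting with $\uu_h^{i,\ell+1}(z)$ gives $\abs{\uu_h^{i,\ell+1}(z)}^2 = \mm_h^i(z)\cdot\uu_h^{i,\ell+1}(z) \le \abs{\uu_h^{i,\ell+1}(z)}$, so $\abs{\uu_h^{i,\ell+1}(z)} \le 1$.

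\textbf{Part (ii).} I would subtract two consecutive iterations and test with the differences. Denote $\delta\uu := \uu_h^{i,\ell+2} - \uu_h^{i,\ell+1}$, $\delta\zz := \zz_h^{i,\ell+2} - \zz_h^{i,\ell+1}$, and use subscript $-$ for the previous-step differences. For~\eqref{eq:mpsx1} the bilinear decomposition
\[
\uu_h^{i,\ell+2}\times\zz_h^{i,\ell+1} - \uu_h^{i,\ell+1}\times\zz_h^{i,\ell} = \delta\uu\times\zz_h^{i,\ell+1} + \uu_h^{i,\ell+1}\times\delta\zz_{-},
\]
together with testing against $\delta\uu$ and the orthogonality $\inner[h]{\delta\uu\times\zz_h^{i,\ell+1}}{\delta\uu}=0$, yields $\norm[h]{\delta\uu} \le (k/2)\,\norm[\LL^\infty(\Omega)]{\uu_h^{i,\ell+1}}\,\norm[h]{\delta\zz_-} \le (k/2)\norm[h]{\delta\zz_-}$ by part~(i). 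The same strategy applied to~\eqref{eq:mpsx2}, tested with $\delta\zz$, leaves four RHS terms; all of them are linear in $\delta\uu$ (the $\uu\times\delta\zz$ term vanishes by orthogonality). The critical one, $k\,\inner[h]{\uu_h^{i,\ell+1}\times\Ph\heff[\delta\uu]}{\delta\zz}$, is controlled via the inverse-type estimate
\[
\norm[h]{\Ph\heff[\vec a]} \lesssim h_{\min}^{-2}\,\norm[h]{\vec a}
\quad\text{for all } \vec a \in \S^1(\T_h)^3,
\]
which follows from the definition~\eqref{eq:pseudo-projection} of $\Ph$, the identity $\inner{\heff[\vec a]}{\ppsi_h} = -\inner{\grad\vec a}{\grad\ppsi_h}$, and two applications of the standard inverse inequality $\norm[\LL^2(\Omega)]{\grad\ppsi_h} \lesssim h_{\min}^{-1}\norm[\LL^2(\Omega)]{\ppsi_h}$. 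After Young's inequality is used to absorb $\norm[h]{\delta\zz}^2$ into $2\tau\norm[h]{\delta\zz}^2$, the resulting bound is of the form $\norm[h]{\delta\zz} \lesssim (k\,h_{\min}^{-2} + k)\norm[h]{\delta\uu}$. Substituting the (i-a) estimate $\norm[h]{\delta\uu} \lesssim k\norm[h]{\delta\zz_-}$ gives the composite reduction factor $q \simeq k^2\,h_{\min}^{-2}$. For $k < k_0$ and $k < C\,h_{\min}$ with $C$ small enough, this produces \eqref{eq:contraction} with some $q \in (0,1)$. The technically demanding piece is the careful inverse-estimate bookkeeping on all four RHS terms, in particular ensuring that the mixed $\LL^\infty/\LL^2$ bounds combine to give $q \lesssim k^2 h_{\min}^{-2}$ and not a worse power of $h_{\min}$.

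\textbf{Part (iii).} Geometric reduction of the residuals under~\eqref{eq:contraction} immediately implies that for any fixed $\eps > 0$ the stopping criterion~\eqref{eq:mpsx_stopping} is reached after at most $\mathcal{O}(\log(1/\eps))$ iterations. For $\mm_h^{i+1} \in \Mh$, the identity derived in part~(i) applied at $\ell = \ell_i$ yields $\abs{\uu_h^{i,\ell_i+1}(z)}^2 = \mm_h^i(z)\cdot\uu_h^{i,\ell_i+1}(z)$, and using $\abs{\mm_h^i(z)} = 1$ (inductive hypothesis with base case $\mm_h^0 \in \Mh$) gives
\[
\abs{\mm_h^{i+1}(z)}^2 = 4\abs{\uu_h^{i,\ell_i+1}(z)}^2 - 4\,\uu_h^{i,\ell_i+1}(z)\cdot\mm_h^i(z) + 1 = 1.
\]
For $\ww_h^{i+1} \in \Kh[\mm_h^{i+1}]$, I would invoke the fact that the (inner) fixed point of iteration~(i)---approached as $\eps \to 0$ and the unique solution of the nonlinear system~\eqref{eq:mpsx}---coincides with $(\uu,\zz) = (\mm_h^{i+1/2},\ww_h^{i+1/2})$ in Algorithm~\ref{alg:mps} under the midpoint change of variables. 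In particular, at this fixed point the updates $\mm_h^{i+1} = 2\uu - \mm_h^i$ and $\ww_h^{i+1} = 2\zz - \ww_h^i$ are precisely the ones delivered by Algorithm~\ref{alg:mps}, so Proposition~\ref{prop:conservation} (together with the initialization $\ww_h^0 = \Interp[\mm_h^0\times\vv_h^0] \in \Kh[\mm_h^0]$) yields the claimed orthogonality.
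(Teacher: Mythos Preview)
Your arguments for part~(i) and for $\mm_h^{i+1}\in\Mh$ in part~(iii) match the paper's. There is, however, a genuine gap in part~(ii), and your treatment of $\ww_h^{i+1}\in\Kh[\mm_h^{i+1}]$ in part~(iii) does not address the actual claim.

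\emph{Part~(ii).} When you subtract consecutive instances of~\eqref{eq:mpsx2} and test with $\delta\zz := \zz_h^{i,\ell+2}-\zz_h^{i,\ell+1}$, one of the four right-hand-side terms you obtain (after the $\uu\times\delta\zz$ contribution drops out) is
\[
-k\,\inner[h]{\delta\uu\times\zz_h^{i,\ell+1}}{\delta\zz}
\]
(or with $\zz_h^{i,\ell+2}$, depending on how you split the bilinear product). This term is indeed linear in $\delta\uu := \uu_h^{i,\ell+2}-\uu_h^{i,\ell+1}$, but its coefficient involves $\zz_h^{i,\ell+1}$, for which no a~priori bound independent of~$\ell$ is available: part~(i) yields $\norm[\LL^\infty(\Omega)]{\uu_h^{i,\ell+1}}\le 1$, but there is no analogue for $\zz_h^{i,\ell+1}$. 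Hence your claimed estimate $\norm[h]{\delta\zz} \lesssim (k\hmin^{-2}+k)\norm[h]{\delta\uu}$ cannot be obtained this way. The paper resolves this by testing the difference of~\eqref{eq:mpsx1} \emph{also} with $\delta\zz$; adding that identity to the $\delta\zz$-tested difference of~\eqref{eq:mpsx2} replaces the uncontrollable term by $k\,\inner[h]{\uu_h^{i,\ell+1}\times(\zz_h^{i,\ell+1}-\zz_h^{i,\ell})}{\delta\zz}$, which \emph{is} bounded via $\norm[\LL^\infty(\Omega)]{\uu_h^{i,\ell+1}}\le 1$, at the cost of a harmless extra term $2\inner[h]{\delta\uu}{\delta\zz}$. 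This introduces a direct $k\norm[h]{\zz_h^{i,\ell+1}-\zz_h^{i,\ell}}$ contribution, which is why the paper's contraction constant is $q = [(2+\alpha+\tau)k + \Cinv^2 k^2 \hmin^{-2}]/(2\tau)$ rather than your $q\simeq k^2\hmin^{-2}$, and why the statement requires both $k<k_0$ and $k<C\hmin$.

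\emph{Part~(iii).} Your argument for $\ww_h^{i+1}\in\Kh[\mm_h^{i+1}]$ concerns only the exact fixed point of the iteration, i.e., the $\eps=0$ limit. The proposition, however, is about the output of Algorithm~\ref{alg:mpsx} defined from $(\uu_h^{i,\ell_i+1},\zz_h^{i,\ell_i+1})$ with $\ell_i$ finite and $\eps>0$; these are \emph{not} the solution of~\eqref{eq:mpsx}, so Proposition~\ref{prop:conservation} does not apply as stated. The paper argues instead that, by construction, $\ww_h^{i+1}=2\zz_h^{i,\ell_i+1}-\ww_h^i$ satisfies~\eqref{eq:mps2} exactly (since~\eqref{eq:mpsx2} involves only $\uu_h^{i,\ell_i+1}$ and $\zz_h^{i,\ell_i+1}$), while $\mm_h^{i+1}$ satisfies the first equation with $\zz_h^{i,\ell_i}$ in place of $\ww_h^{i+1/2}$, and then applies the nodal testing of Proposition~\ref{prop:conservation} to these equations for the inexact iterates.
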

%%%%%%%%%%%%%%%%%%%%

Proposition~\ref{prop:fixed-point}{\rm(ii)} shows that,
under suitable assumptions,
the mapping defining the fixed-point iteration is a contraction.
Therefore, under the same assumptions,
Banach fixed-point theorem ensures that~\eqref{eq:mps}
admits a unique solution so that Algorithm~\ref{alg:mps} is well-posed.

In view of the stability and convergence analysis,
we observe that,
for all $i \in \N_0$,
the iterates $(\mm_h^{i+1} ,  \ww_h^{i+1}) \in \Mh \times \Kh[\mm_h^{i+1}]$ of Algorithm~\ref{alg:mpsx}
satisfy
\begin{align*}
\inner[h]{d_t\mm_h^{i+1}}{\pphi_h}
& =
- \inner[h]{\mm_h^{i+1/2} \times \ww_h^{i+1/2}}{\pphi_h}
+ \inner[h]{\mm_h^{i+1/2} \times \rr_h^i}{\pphi_h}, \\
\tau \inner[h]{d_t\ww_h^{i+1}}{\ppsi_h}
& =
\inner[h]{\mm_h^{i+1/2} \times \Ph\heff[\mm_h^{i+1/2}]}{\ppsi_h}
- \alpha \inner[h]{\mm_h^{i+1/2} \times d_t\mm_h^{i+1}}{\ppsi_h} \\
& \qquad
- \inner[h]{\mm_h^{i+1/2} \times \ww_h^{i+1/2}}{\ppsi_h}
\end{align*}
for all $(\pphi_h , \ppsi_h) \in \S^1(\T_h)^3 \times \S^1(\T_h)^3$,
where $\rr_h^i := \zz_h^{i,\ell_i+1} - \zz_h^{i,\ell_i} \in \S^1(\T_h)^3$
satisfies $\norm[h]{\rr_h^i} \le \eps$.

%%%%%%%%%%%%%%%%%%%%
\subsection{Stability and convergence results}
%%%%%%%%%%%%%%%%%%%%

In the following proposition, we establish the discrete energy laws satisfied by the algorithms.
Its proof is postponed to Section~\ref{sec:stability}.

%%%%%%%%%%%%%%%%%%%%
\begin{proposition}[Discrete energy law and stability] \label{prop:energy}
Let $i \in \N_0$.\\
{\rm(i)}
Suppose that the mesh $\T_h$ satisfies angle condition~\eqref{eq:angleCondition}.
The approximations generated by Algorithm~\ref{alg:tps} satisfy the discrete energy law
\begin{equation} \label{eq:tps:energy}
\J_h(\mm_h^{i+1},\vv_h^{i+1})
+ \alpha k \norm[h]{\vv_h^{i+1}}^2
+ \frac{\tau k^2}{2} \norm[h]{d_t \vv_h^{i+1}}^2
+  \frac{k^2}{2} \norm[\LL^2(\Omega)]{\Grad\vv_h^{i+1}}^2
\leq \J_h(\mm_h^i,\vv_h^i).
\end{equation}
{\rm(ii)}
The approximations generated by Algorithm~\ref{alg:mps} satisfy the discrete energy law
\begin{equation} \label{eq:mps:energy}
\J_h(\mm_h^{i+1},\ww_h^{i+1})
+ \alpha k \norm[h]{d_t\mm_h^{i+1}}^2
= \J_h(\mm_h^i,\ww_h^i).
\end{equation}
{\rm(iii)}
The approximations generated by Algorithm~\ref{alg:mpsx} satisfy the discrete energy law
\begin{equation} \label{eq:mpsx:energy}
\begin{split}
\J_h(\mm_h^{i+1},\ww_h^{i+1})
+ \alpha k \norm[h]{d_t\mm_h^{i+1}}^2
+ k \inner[h]{\mm_h^{i+1/2} \times \rr_h^i}{\Ph\heff[\mm_h^{i+1/2}] - \alpha & \, d_t\mm_h^{i+1}} \\
& = \J_h(\mm_h^i,\ww_h^i).
\end{split}
\end{equation}
\end{proposition}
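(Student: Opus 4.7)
The plan is to derive each of the three discrete energy identities by testing the corresponding variational formulations with carefully chosen functions and combining the resulting scalar identities. Throughout, the two elementary facts that drive the cancellations are the vertex-wise identity $\inner[h]{\vec{a}\times\vec{b}}{\vec{b}}=0$ and the scalar triple product identity $(\vec{a}\times\vec{b})\cdot\vec{c}=(\vec{b}\times\vec{c})\cdot\vec{a}$, both of which the mass-lumped product~\eqref{eq:mass-lumping} faithfully transmits.

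For part~{\rm(i)} I would test~\eqref{eq:tps1} with the admissible choice $\pphi_h = \vv_h^{i+1} \in \Kh[\mm_h^i]$. The $\inner[h]{\mm_h^i\times\vv_h^{i+1}}{\vv_h^{i+1}}$ contribution vanishes, while the two $\Ph\heff$ terms reduce, via~\eqref{eq:pseudo-projection} and~\eqref{eq:heff}, to $-\inner{\Grad\mm_h^i}{\Grad\vv_h^{i+1}}$ and $k\norm[\LL^2(\Omega)]{\Grad\vv_h^{i+1}}^2$. The standard discrete identity $k\inner[h]{d_t\vv_h^{i+1}}{\vv_h^{i+1}} = \tfrac{1}{2}(\norm[h]{\vv_h^{i+1}}^2 - \norm[h]{\vv_h^i}^2) + \tfrac{k^2}{2}\norm[h]{d_t\vv_h^{i+1}}^2$ produces the kinetic increment. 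The crucial ingredient for recovering $\E[\mm_h^{i+1}]$ is a Bartels-type non-expansivity estimate for the nodal projection onto $\sphere$: since $\abs{\mm_h^i(z) + k\vv_h^{i+1}(z)} \ge 1$ at every vertex and the angle condition~\eqref{eq:angleCondition} holds, one obtains $\norm[\LL^2(\Omega)]{\Grad\mm_h^{i+1}}^2 \le \norm[\LL^2(\Omega)]{\Grad(\mm_h^i + k\vv_h^{i+1})}^2$. Expanding this inequality and substituting into the tested equation yields~\eqref{eq:tps:energy}.

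For part~{\rm(ii)} I would test~\eqref{eq:mps1} with $\pphi_h = \Ph\heff[\mm_h^{i+1/2}]$, so that the left-hand side telescopes to $-\tfrac{1}{k}(\E[\mm_h^{i+1}]-\E[\mm_h^i])$ via the midpoint identity $\inner{\Grad d_t\mm_h^{i+1}}{\Grad\mm_h^{i+1/2}} = \tfrac{1}{2k}(\norm[\LL^2(\Omega)]{\Grad\mm_h^{i+1}}^2 - \norm[\LL^2(\Omega)]{\Grad\mm_h^i}^2)$. I would then test~\eqref{eq:mps2} with $\ppsi_h = \ww_h^{i+1/2}$: the self-cross-product term vanishes, the effective-field term is rewritten via the triple product as $-\inner[h]{\mm_h^{i+1/2}\times\ww_h^{i+1/2}}{\Ph\heff[\mm_h^{i+1/2}]}$, and the damping term is handled by an auxiliary test of~\eqref{eq:mps1} with $\pphi_h = d_t\mm_h^{i+1}$, which yields $\norm[h]{d_t\mm_h^{i+1}}^2 = -\inner[h]{\mm_h^{i+1/2}\times\ww_h^{i+1/2}}{d_t\mm_h^{i+1}}$. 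The midpoint identity $\inner[h]{d_t\ww_h^{i+1}}{\ww_h^{i+1/2}} = \tfrac{1}{2k}(\norm[h]{\ww_h^{i+1}}^2 - \norm[h]{\ww_h^i}^2)$ supplies the kinetic increment, and summing the two tested equations cancels the cross-helicity term $\inner[h]{\mm_h^{i+1/2}\times\ww_h^{i+1/2}}{\Ph\heff[\mm_h^{i+1/2}]}$, leaving~\eqref{eq:mps:energy} as an equality. Part~{\rm(iii)} is a minor modification of this argument: applying the same two tests to the perturbed version of~\eqref{eq:mps1} displayed just above the proposition contributes $-\inner[h]{\mm_h^{i+1/2}\times\rr_h^i}{\Ph\heff[\mm_h^{i+1/2}]}$ from the first test and $+\alpha\inner[h]{\mm_h^{i+1/2}\times\rr_h^i}{d_t\mm_h^{i+1}}$ from the second, which combine into the extra factor appearing in~\eqref{eq:mpsx:energy}.

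The only genuine obstacle is the non-expansivity estimate invoked in part~{\rm(i)}, which is precisely what forces the angle condition~\eqref{eq:angleCondition}; parts~{\rm(ii)} and~{\rm(iii)} are purely algebraic consequences of the midpoint discretization and of vertex-wise cross-product cancellations, and in particular require no mesh restriction.
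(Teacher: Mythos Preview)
Your proposal is correct and follows essentially the same route as the paper: testing~\eqref{eq:tps1} with $\vv_h^{i+1}$ together with the Bartels non-expansivity estimate under~\eqref{eq:angleCondition} for part~{\rm(i)}, and testing~\eqref{eq:mps1} with $\Ph\heff[\mm_h^{i+1/2}]$ and $d_t\mm_h^{i+1}$ and~\eqref{eq:mps2} with $\ww_h^{i+1/2}$, combined via the midpoint and triple-product identities, for parts~{\rm(ii)}--{\rm(iii)}. The only cosmetic remark is that your sign bookkeeping in part~{\rm(iii)} implicitly places the residual contributions on the right-hand side before moving them to the left, which is consistent with~\eqref{eq:mpsx:energy}.
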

%%%%%%%%%%%%%%%%%%%%

In the energy law satisfied by Algorithm~\ref{alg:tps},
besides the LLG-intrinsic dissipation,
we observe the presence of numerical dissipation due to the use of the backward Euler method;
cf.\ the last two terms on the left-hand side of~\eqref{eq:tps:energy}.
Algorithm~\ref{alg:mps} fulfills a discrete energy identity,
which reflects the fact that the midpoint rule is symplectic.
The same identity,
apart from an additional term coming from
the inexact solution of the nonlinear system,
is satisfied by Algorithm~\ref{alg:mpsx}.

From each algorithm, we obtain a sequence of approximations
$\{\mm_h^i\}_{i \in \N_0}$, which we can use to
define the piecewise affine time reconstruction $\mm_{hk} : (0,\infty) \to \S^1(\T_h)^3$
(denoted by $\mm_{hk}^{\eps}$ in the case of Algorithm~\ref{alg:mpsx})
as
\begin{equation*}
\mm_{hk}(t) := \frac{t-t_i}{k}\mm_h^{i+1} + \frac{t_{i+1} - t}{k}\mm_h^i
\quad
\text{for all }
i \in \N_0
\text{ and }
t \in [t_i,t_{i+1});
\end{equation*}
see~\eqref{eq:reconstructions}.
In the following theorem, we show that, under appropriate assumptions,
the sequence $\{\mm_{hk}\}$ (resp., $\{\mm_{hk}^{\eps}\}$ for Algorithm~\ref{alg:mpsx})
converges in a suitable sense towards solutions of iLLG
as $h$, $k$ (and $\eps$) go to $0$.
Its proof is postponed to Section~\ref{sec:convergence}.

%%%%%%%%%%%%%%%%%%%%
\begin{theorem} \label{thm:convergence}
Let the approximate initial conditions satisfy
\begin{equation} \label{eq:convergence0}
\mm_h^0 \to \mm^0 \quad \text{in } \HH^1(\Omega)
\quad
\text{and}
\quad
\vv_h^0 \to \vv^0 \quad \text{in } \LL^2(\Omega)
\quad
\text{as } h \to 0.
\end{equation}
{\rm(i)}
For Algorithm~\ref{alg:tps},
assume that each mesh $\T_h$ satisfies the angle condition~\eqref{eq:angleCondition}
and that $k = o(\hmin^{d/2})$ as $h,k \to 0$.
For Algorithm~\ref{alg:mps}, assume that the scheme is well-posed.
Then, there exist a global weak solution $\mm : \Omega \times (0,\infty) \to \sphere$ of iLLG
in the sense of Definition~\ref{def:weak}
and a (nonrelabeled) subsequence of $\{ \mm_{hk} \}$
which converges towards $\mm$ as $h,k \to 0$.
In particular, as $h,k \to 0$, it holds that
$\mm_{hk} \weakstarto \mm$ in $L^{\infty}(0,\infty; H^1(\Omega ; \sphere))$
and $\mm_{hk}\vert_{\Omega_T} \weakto \mm\vert_{\Omega_T}$ in $\HH^1(\Omega_T)$ for all $T>0$.\\
{\rm(ii)}
For Algorithm~\ref{alg:mpsx}, assume that the scheme is well-posed and that
$\eps = \OO(\hmin)$ as $h,\eps \to 0$.
For all $T>0$, there exist $\mm : \Omega \times (0,T) \to \sphere$, 
which satisfies the requirements {\rm(i)}--{\rm(iv)} of Definition~\ref{def:weak},
and a (nonrelabeled) subsequence of $\{ \mm_{hk}^\eps \}$
such that 
$\mm_{hk}^{\eps}\vert_{\Omega_T} \weakstarto \mm$ in $L^{\infty}(0,T; H^1(\Omega ; \sphere))$
and $\mm_{hk}^{\eps}\vert_{\Omega_T} \weakto \mm$ in $\HH^1(\Omega_T)$
as $h,k,\eps\to 0$.
\end{theorem}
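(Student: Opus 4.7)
The plan is to follow the canonical Alouges-type strategy for constraint-preserving finite element schemes: energy-based a priori bounds, compactness, and identification of the limit via passage to the limit in the discrete variational formulations. First, telescoping the discrete energy laws of Proposition~\ref{prop:energy} and using the convergence of the initial data~\eqref{eq:convergence0} together with the uniform norm equivalence $\norm[h]{\cdot}\simeq\norm[\LL^2(\Omega)]{\cdot}$ on $\S^1(\T_h)^3$ (valid under shape-regularity), I obtain uniform bounds for $\mm_{hk}$ in $L^\infty(0,\infty;\HH^1(\Omega))$, for $\partial_t\mm_{hk}$ (and, for Algorithm~\ref{alg:tps}, also $\vv_{hk}^+$) in $L^\infty(0,\infty;\LL^2(\Omega))$, and for the dissipative contributions in $L^2$ in time. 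For Algorithm~\ref{alg:mpsx}, the iteration-residual term in~\eqref{eq:mpsx:energy} is absorbed into the right-hand side via Young's inequality, using $\norm[h]{\rr_h^i}\le\eps$, an inverse estimate, and the assumption $\eps=\OO(\hmin)$.

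Next, Banach--Alaoglu combined with a diagonal argument over an exhausting sequence of time intervals yields a subsequence and a limit $\mm$ with $\mm_{hk}\weakstarto\mm$ in $L^\infty(0,\infty;\HH^1(\Omega))$ and $\mm_{hk}|_{\Omega_T}\weakto\mm$ in $\HH^1(\Omega_T)$. Rellich--Kondrachov promotes this to strong $\LL^2(\Omega_T)$ convergence, and to pointwise a.e.\ convergence along a further subsequence. The sphere constraint on $\mm$ is inherited from $\mm_h^i\in\Mh$ via the standard estimate $\norm[\LL^2(\Omega)]{\interp[\abs{\mm_h^i}^2]-\abs{\mm_h^i}^2}\lesssim h\norm[\LL^2(\Omega)]{\Grad\mm_h^i}$, and the attainment of the initial conditions follows from~\eqref{eq:convergence0} and the weak time continuity of $\mm\in\HH^1(\Omega_T)$.

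The crux is the passage to the limit in the discrete variational identities. For Algorithm~\ref{alg:tps}, given $\vvphi\in C_c^\infty([0,T);\HH^1(\Omega))$, I build admissible tangential approximants by setting $\pphi_h^i:=\Interp[\vvphi(t_i)-(\vvphi(t_i)\cdot\mm_h^i)\mm_h^i]\in\Kh[\mm_h^i]$, which converges to $\vvphi-(\vvphi\cdot\mm)\mm$ in an appropriate Bochner norm thanks to the strong convergence of $\mm_{hk}$ and standard interpolation estimates. Plugging $\pphi_h^i$ into~\eqref{eq:tps1}, multiplying by $k$, summing over $i$, and rewriting in terms of the time reconstructions, I perform a discrete integration by parts in time on the inertial term $\tau\inner[h]{d_t\vv_h^{i+1}}{\pphi_h^i}$ to transfer the derivative onto the test function and recover the initial-data boundary term of~\eqref{eq:weak:variational}. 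The definition~\eqref{eq:pseudo-projection} of $\Ph$ converts $\inner[h]{\Ph\heff[\mm_h^i]}{\pphi_h^i}$ into $-\inner{\Grad\mm_h^i}{\Grad\pphi_h^i}$, whose limit matches the effective-field term, while mass-lumping errors are $\OO(h)$ on $\S^1\times\S^1$ and harmless. For Algorithms~\ref{alg:mps} and~\ref{alg:mpsx}, I employ the skew-symmetric test choice $\ppsi_h^i:=\Interp[\mm_h^{i+1/2}\times\vvphi(t_i)]$ in~\eqref{eq:mps2} (resp.\ its perturbed counterpart), which together with~\eqref{eq:mps1} eliminates $\ww_h^{i+1/2}$ in favor of $d_t\mm_h^{i+1}$ and produces the same limit identity; the residual $\rr_h^i$ contributes a term of size $\OO(\eps/\hmin)$ by inverse estimate and so vanishes under $\eps=\OO(\hmin)$. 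The energy inequality~\eqref{eq:weak:energy} is finally inherited from~\eqref{eq:tps:energy} or~\eqref{eq:mps:energy} by weak lower semicontinuity, the numerical-dissipation $k^2$-terms in~\eqref{eq:tps:energy} dropping out in the limit.

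I expect the main obstacle to be the consistency analysis of Algorithm~\ref{alg:tps}. The spurious term $k\inner[h]{\Ph\heff[\vv_h^{i+1}]}{\pphi_h^i}=-k\inner{\Grad\vv_h^+}{\Grad\pphi_h^i}$ integrates in time to $\OO(\sqrt{k})$ by the dissipation bound and so vanishes unconditionally; the delicate point is rather that the discrete velocity $d_t\mm_h^{i+1}$ differs from $\vv_h^{i+1}$ by a nodal-projection error of pointwise size $\OO(k\abs{\vv_h^{i+1}}^2)$. To identify $\partial_t\mm=\vv$ in the limit, I must show $\partial_t\mm_{hk}-\vv_{hk}^+\to 0$ in $\LL^2(\Omega_T)$, which through the global inverse estimate $\norm[\LL^4(\Omega)]{\vv_h}\lesssim\hmin^{-d/4}\norm[\LL^2(\Omega)]{\vv_h}$ is exactly what the CFL-type condition $k=o(\hmin^{d/2})$ is designed to enforce. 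Once this consistency step and its analogue for Algorithm~\ref{alg:mpsx} (using $\eps=\OO(\hmin)$) are in place, the overall convergence follows from the weak/strong compactness of the previous two steps.
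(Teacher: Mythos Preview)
Your overall architecture (telescoped energy law $\Rightarrow$ uniform bounds $\Rightarrow$ compactness $\Rightarrow$ identification of the limit) matches the paper, and your use of the CFL condition $k=o(\hmin^{d/2})$ to control the nodal-projection defect $d_t\mm_h^{i+1}-\vv_h^{i+1}$ via~\eqref{eq:geometric} and an inverse estimate is exactly the mechanism the paper exploits. The genuine difference is in the \emph{choice of discrete test functions}. For Algorithm~\ref{alg:tps} the paper does not use the tangential projection $\Interp[\vvphi-(\vvphi\cdot\mm_h^i)\mm_h^i]$ but rather the cross-product test $\pphi_h=\Interp[\mm_h^i\times\vvphi(t_i)]\in\Kh[\mm_h^i]$; under mass-lumping and the nodal identities $\abs{\mm_h^i(z)}=1$, $\mm_h^i(z)\cdot\vv_h^{i+1}(z)=0$, the precession term $\inner[h]{\mm_h^i\times\vv_h^{i+1}}{\mm_h^i\times\vvphi}$ collapses to $\inner[h]{\vv_h^{i+1}}{\vvphi}$, and after summation by parts on the inertial term one lands directly on~\eqref{eq:weak:variational} without any post-hoc conversion between equivalent weak formulations. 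Your tangential-projection route would instead produce the weak form of~\eqref{eq:llg-alternative}, from which~\eqref{eq:weak:variational} must still be recovered by substituting $\vvphi\mapsto\mm\times\ppsi$; this works but adds a step and requires keeping track of the regularity of $\mm\times\ppsi$.

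For Algorithms~\ref{alg:mps}--\ref{alg:mpsx} the divergence is sharper. The paper does \emph{not} use a skew test $\Interp[\mm_h^{i+1/2}\times\vvphi]$ in~\eqref{eq:mps2}; instead it tests both~\eqref{eq:mps1} and~\eqref{eq:mps2} with plain interpolants $\Interp[\zzeta(t_i)]$, $\Interp[\vvphi(t_i)]$, passes each to the limit separately, and only \emph{at the continuous level} identifies $\ww=\mm\times\mmt$ (from~\eqref{eq:mps1}, $\abs{\mm}=1$, $\mm\cdot\ww=0$) and substitutes into the limit of~\eqref{eq:mps2}. Your claim that testing~\eqref{eq:mps2} with $\Interp[\mm_h^{i+1/2}\times\vvphi]$ ``eliminates $\ww_h^{i+1/2}$'' is not quite right: it only trades $\mm_h^{i+1/2}\times\ww_h^{i+1/2}$ for $-d_t\mm_h^{i+1}$ in the \emph{last} term, while the inertial term $\tau\inner[h]{d_t\ww_h^{i+1}}{\cdot}$ still carries $\ww_h^{i+1}$ after summation by parts, so you need the limit of~\eqref{eq:mps1} anyway to close the argument. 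The paper's separation of the two equations is both simpler and what actually makes the identification $\ww=\mm\times\mmt$ transparent.
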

%%%%%%%%%%%%%%%%%%%%

In order to refer to the limit of the approximations generated by Algorithm~\ref{alg:mpsx},
we have not used the expression `global weak solution'.
This is a consequence of the fact that,
for this algorithm,
the boundedness result
we are able to show 
(see Proposition~\ref{prop:boundedness} below)
is not uniform with respect to the (arbitrary but fixed) final time $T>0$.

To conclude,
we summarize the results of our analysis for the proposed algorithms:
\begin{itemize}
\item
Algorithm~\ref{alg:tps} is unconditionally well-posed, unconditionally stable
(under the angle condition~\eqref{eq:angleCondition}),
and its convergence towards a global weak solution of iLLG
requires the CFL condition $k = o(\hmin^{d/2})$ as $h,k \to 0$.
\item
Algorithm~\ref{alg:mps} is well-posed if
$k$ is sufficiently small and
the CFL condition
$k < C \hmin$ holds.
Assuming its well-posedness,
it is unconditionally stable and unconditionally convergent
towards a global weak solution of iLLG.
\item
Algorithm~\ref{alg:mpsx} is well-posed if
$k$ is sufficiently small and
the CFL condition
$k < C \hmin$ holds.
Assuming its well-posedness and choosing a stopping tolerance $\eps$ having the same order of $\hmin$,
for all $T>0$,
it is unconditionally stable and unconditionally convergent
towards a function
$\mm : \Omega \times (0,T) \to \sphere$ 
which fulfills the properties~{\rm(i)}--{\rm(iv)} of Definition~\ref{def:weak}.
\end{itemize}

%%%%%%%%%%%%%%%%%%%
\begin{remark}
For the sake of brevity,
we restrict ourselves to the case of algorithms generating approximations
which satisfy the unit-length constraint at the vertices of the mesh,
i.e., $\mm_h^i \in \Mh$ for all $i \in \N_0$.
For the tangent plane scheme, this property is guaranteed by the use of the nodal projection.
A theoretical consequence is that
the stability analysis requires the assumption of the angle condition~\eqref{eq:angleCondition},
which turns out to be quite restrictive in 3D. 
For LLG,
a tangent plane scheme which avoids the nodal projection (and the related angle condition)
was proposed in~\cite{ahpprs2014}; see also~\cite{bartels2016,ft2017}.
We believe that a similar approach can be pursued to construct
a projection-free tangent plane scheme for iLLG.
\end{remark}
%%%%%%%%%%%%%%%%%%%

%%%%%%%%%%%%%%%%%%%%
\section{Numerical results} \label{sec:numerics}
%%%%%%%%%%%%%%%%%%%%

Before presenting the proof of the results stated in Section~\ref{sec:algorithm},
we aim to show the effectivity of the proposed algorithms by means of two
numerical experiments.
For the sake of brevity, in this section, we refer to the tangent plane scheme (Algorithm~\ref{alg:tps})
as TPS and to the angular momentum method (Algorithm~\ref{alg:mps} or, more appropriately,
its effective realization given in Algorithm~\ref{alg:mpsx}) as AMM.
The computation presented in this section were obtained with a MATLAB
implementation of the proposed algorithms.
All linear systems were solved using the direct solver provided by MATLAB's backslash operator.

%%%%%%%%%%%%%%%%%%%%
\subsection{Finite-time blow-up of weak solutions}  \label{sec:blow-up}
%%%%%%%%%%%%%%%%%%%%

We investigate the performance
of the algorithms for different choices of the discretization parameters $h$ and $k$ (and $\eps$ for AMM).
At the same time, 
we numerically study for a weak solution $\mm$ of iLLG the occurrence of a so-called \emph{finite-time blow-up},
i.e., whether
there exists $T^*>0$ such that
\begin{equation*}
\lim_{t \to T^*} \norm[\LL^{\infty}(\Omega)]{\Grad\mm(t)} = \infty.
\end{equation*}
To this end, we adapt to iLLG the model problem studied in~\cite{bfp2007,kw2014,bartels2015a}
for the wave map equation and in~\cite{bkp2008} for LLG.

We consider the nondimensional setting presented in Section~\ref{sec:model}
for the unit square domain $\Omega = (-1/2,1/2)^2$ in the time interval $(0,2)$.
We set $\alpha = \tau = 1$ in~\eqref{eq:llg-new}
and consider
homogeneous Neumann boundary conditions~\eqref{eq:bc}
as well as the initial conditions
$\mm^0(x) = (2a(x)x_1, 2a(x)x_2, a(x)^2 - \abs{x}^2) / (a(x)^2 + \abs{x}^2)$
with $a(x) = \max\{ 0 , (1 - 2\abs{x})^4 \}$ for all $x = (x_1,x_2) \in \Omega$
and $\vv^0 \equiv \0$.
For the wave map equation~\eqref{eq:wavemap} and LLG~\eqref{eq:llg},
this setting leads to numerical approximations with large gradients,
which suggests the occurrence of a finite-time blow-up.
For snapshots of numerical approximations which illustrate this phenomenon,
we refer to, e.g., \cite[Figure~3--4]{bfp2007} or~\cite[Figures~1--2]{bkp2008}.

First, we investigate the convergence of the fixed-point iteration in AMM
analyzed in Proposition~\ref{prop:fixed-point}{\rm(ii)}.
For $\ell = 5,6,7$, we consider a uniform mesh $\T_{h_\ell}$ of the unit square
consisting of $2^{2\ell+1}$ rectangular triangles.
The resulting mesh size is $h_\ell = \sqrt{2} \, 2^{-\ell}$.
In the stopping criterion~\eqref{eq:mpsx_stopping},
in order to better evaluate the convergence of the fixed-point iteration,
we use the small tolerance $\eps =$ \num{1e-12}.
The iteration is terminated either when
the stopping criterion~\eqref{eq:mpsx_stopping} is met
or when the number of iterations exceeds \num{1000}.
We use the time-step size $k_\ell = \delta h_\ell$ for different values of $0 < \delta < 1$.

%%%%%%%%%%%%%%%%%%%%
\begin{table}[h]
\begin{tabular}{c|cccc}
& $\delta =$ 0.1 & $\delta =$ 0.2 & $\delta =$ 0.3 & $\delta =$ 0.4 \\
\hline
$\ell = 5$ & \num{4.99} & \num{7.99} & \num{13.87} & \num{35.66}  \\
$\ell = 6$ & \num{4.93} & \num{7.89} & \num{13.82} & \num{35.39}  \\
$\ell = 7$ & \num{4.84} & \num{7.75} & \num{13.66} & \num{34.55}  \\
\end{tabular}
\caption{Experiment of Section~\ref{sec:blow-up}:
Average number of fixed-point iterations
needed to reach the prescribed tolerance $\eps =$ \num{1e-12}
for $\ell= 5,6,7$
and $\delta =$ \num{0.1}, \num{0.2}, \num{0.3}, \num{0.4}.}
\label{tab:fixed-point}
\end{table}
%%%%%%%%%%%%%%%%%%%%

In Table~\ref{tab:fixed-point},
we show the average number of fixed-point iterations needed to
reach the prescribed tolerance $\eps$
for $\delta =$ \num{0.1}, \num{0.2}, \num{0.3}, \num{0.4}.
The number of iterations increases as $\delta$ increases
and decreases (very slightly) as the mesh size decreases.
For all $\ell= 5,6,7$,
the fixed-point iteration
does not converge (within the prescribed maximum number of iterations)
if $\delta$ is larger than a threshold value located between
$0.46$ and $0.47$.
This behavior is in agreement with the dependence
of the contraction constant on the discretization parameters
which can be inferred from the proof of Proposition~\ref{prop:fixed-point}{\rm(ii)},
i.e.,
$q \simeq \delta (1 + h)$
(recall that the tolerance $\eps$ is fixed).

Next,
we compare the performance of TPS and AMM.
We consider the uniform mesh $\T_{h_5}$ (\num{2048} elements and mesh size $h_5=$ \num{0.0442})
and $k = h_5/10$.
For AMM, we set $\eps = h_5/10$ in~\eqref{eq:mpsx_stopping}.

%%%%%%%%%%%%%%%%%%%%
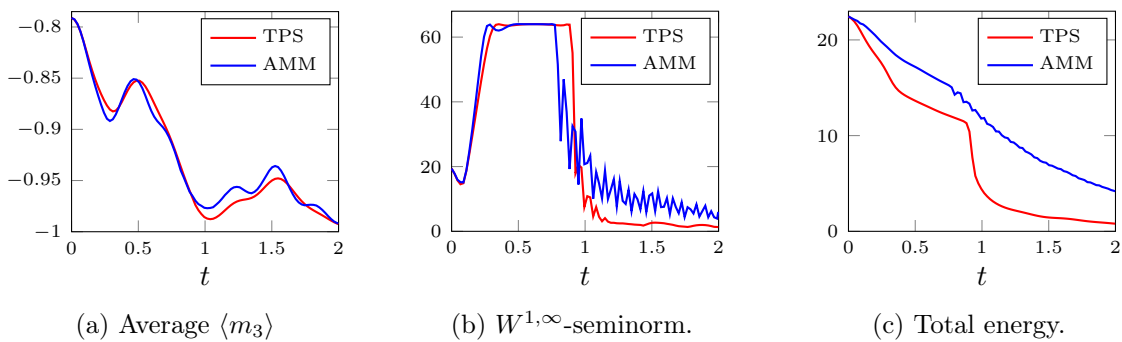
\begin{figure}[h]
\begin{subfigure}{0.32\textwidth}
\centering
\begin{tikzpicture}
\pgfplotstableread{data/exp1/tps_h5_k01_e01.dat}{\tps}
\pgfplotstableread{data/exp1/amm_h5_k01_e01.dat}{\amm}
\begin{axis}
[
width = \textwidth, height=4.5cm,
xlabel = {$t$},
xmin = 0,
xmax = 2,
ymin = -1,
ymax = -0.785,
legend pos= north east,
legend cell align= left,
]
\addplot[red, thick]	table[x=t, y=m3]{\tps};
\addplot[blue, thick]	table[x=t, y=m3]{\amm};
\legend{
TPS,
AMM,
}
\end{axis}
\end{tikzpicture}
\caption{Average $\langle m_3 \rangle$}
\end{subfigure}
%%%
\begin{subfigure}{0.32\textwidth}
\centering
\begin{tikzpicture}
\pgfplotstableread{data/exp1/tps_h5_k01_e01.dat}{\tps}
\pgfplotstableread{data/exp1/amm_h5_k01_e01.dat}{\amm}
\begin{axis}
[
width = \textwidth, height=4.5cm,
xlabel = {$t$},
xmin = 0,
xmax = 2,
ymin = 0,
ymax = 68,
legend pos= north east,
legend cell align= left,
]
\addplot[red, thick]	table[x=t, y=W1infty]{\tps};
\addplot[blue, thick]	table[x=t, y=W1infty]{\amm};
\legend{
TPS,
AMM,
}
\end{axis}
\end{tikzpicture}
\caption{$W^{1,\infty}$-seminorm.}
\end{subfigure}
%%%
\begin{subfigure}{0.32\textwidth}
\centering
\begin{tikzpicture}
\pgfplotstableread{data/exp1/tps_h5_k01_e01.dat}{\tps}
\pgfplotstableread{data/exp1/amm_h5_k01_e01.dat}{\amm}
\begin{axis}
[
width = \textwidth, height=4.5cm,
xlabel = {$t$},
xmin = 0,
xmax = 2,
ymin = 0,
ymax = 23,
legend pos= north east,
legend cell align= left,
]
\addplot[red, thick]	table[x=t, y=tot]{\tps};
\addplot[blue, thick]	table[x=t, y=tot]{\amm};
\legend{
TPS,
AMM,
}
\end{axis}
\end{tikzpicture}
\caption{Total energy.}
\end{subfigure}
\caption{Experiment of Section~\ref{sec:blow-up}:
Comparison of the results obtained with TPS and AMM
for $\ell=5$, $k=h_5/10$, and $\eps=h_5/10$.
(a) Evolution of the average magnetization $\langle m_3 \rangle$.
(b) Evolution of the $W^{1,\infty}$-seminorm.
(c) Evolution of the total energy.}
\label{fig:exp1_blowup}
\end{figure}
%%%%%%%%%%%%%%%%%%%%

In Figure~\ref{fig:exp1_blowup},
we plot the evolutions of
the spatial average of the third magnetization component,
i.e.,
$\langle m_3(t) \rangle := \lvert\Omega\rvert^{-1} \int_{\Omega} \mm_{hk}(t)\cdot\ee_3$,
the $W^{1,\infty}$-seminorm $\norm[\LL^{\infty}(\Omega)]{\Grad\mm_{hk}(t)}$,
and
the total discrete energy $\J_h(\mm_{hk}(t),\partial_t\mm_{hk}(t))$
for $t \in [0,2]$.
We observe that the algorithms capture the same average magnetization dynamics.
In particular, at $t \approx 0.3$,
the approximations attain the largest possible value of the
$W^{1,\infty}$-seminorm for functions in $\Mh$
residing in $\T_{h_5}$, which, for all $\ell=5,6,7$, is given by
\begin{equation} \label{eq:Winfty}
\max_{\pphi_h \in \Mh} \norm[\LL^{\infty}(\Omega)] {\Grad\pphi_h} = 
\max_{\pphi_h \in \Mh} \max_{T \in \T_{h,\ell}} \abs{\Grad\pphi_h\vert_T} =
2 / 2^{-\ell} = 2^{\ell+1},
\end{equation}
which is obtained when the magnetizations of two neighboring vertices point to opposite directions.
Indeed, in our case, the magnetization at $(0,0)$ points to the out-of-plane direction $(1,0,0)$,
while all surrounding vectors point to the opposite direction.
This configuration lasts for some time (see the `plateau' in Figure~\ref{fig:exp1_blowup}(b)).
Then, at $t \approx 0.8$, the magnetization at $(0,0)$ is reversed.
This gives rise to oscillations of decaying amplitude.
Looking at Figure~\ref{fig:exp1_blowup}(c),
we observe that,
in agreement with~\eqref{eq:tps:energy},
the total energy decays monotonically in the case of TPS.
In the case of AMM,
the decay is nonmonotone.
Note that possible lack of monotonicity is predicted by the energy law of AMM;
cf.\ the (unsigned) third term on the left-hand side~\eqref{eq:mpsx:energy}.
Moreover, we see that
the curve for TPS is well below the one of AMM.
This fact can be justified by the numerical dissipation
of the backward Euler method;
cf.\ the second and the third terms on the left-hand side of~\eqref{eq:tps:energy}.

%%%%%%%%%%%%%%%%%%%%
\begin{figure}[ht]
\begin{subfigure}{0.32\textwidth}
\centering
\begin{tikzpicture}
\pgfplotstableread{data/exp1/tps_h5_k001_e001.dat}{\tps}
\pgfplotstableread{data/exp1/amm_h5_k001_e001.dat}{\amm}
\begin{axis}
[
width = \textwidth, height=4.5cm,
xlabel = {$t$},
xmin = 0,
xmax = 2,
ymin = -1,
ymax = -0.785,
legend pos= north east,
legend cell align= left,
]
\addplot[red, thick]	table[x=t, y=m3]{\tps};
\addplot[blue, thick]	table[x=t, y=m3]{\amm};
\legend{
TPS,
AMM,
}
\end{axis}
\end{tikzpicture}
\caption{Average $\langle m_3 \rangle$}
\end{subfigure}
%%%
\begin{subfigure}{0.32\textwidth}
\centering
\begin{tikzpicture}
\pgfplotstableread{data/exp1/tps_h5_k001_e001.dat}{\tps}
\pgfplotstableread{data/exp1/amm_h5_k001_e001.dat}{\amm}
\begin{axis}
[
width = \textwidth, height=4.5cm,
xlabel = {$t$},
xmin = 0,
xmax = 2,
ymin = 0,
ymax = 68,
legend pos= north east,
legend cell align= left,
]
\addplot[red, thick]	table[x=t, y=W1infty]{\tps};
\addplot[blue, thick]	table[x=t, y=W1infty]{\amm};
\legend{
TPS,
AMM,
}
\end{axis}
\end{tikzpicture}
\caption{$W^{1,\infty}$-seminorm.}
\end{subfigure}
%%%
\begin{subfigure}{0.32\textwidth}
\centering
\begin{tikzpicture}
\pgfplotstableread{data/exp1/tps_h5_k001_e001.dat}{\tps}
\pgfplotstableread{data/exp1/amm_h5_k001_e001.dat}{\amm}
\begin{axis}
[
width = \textwidth, height=4.5cm,
xlabel = {$t$},
xmin = 0,
xmax = 2,
ymin = 0,
ymax = 23,
legend pos= north east,
legend cell align= left,
]
\addplot[red, thick]	table[x=t, y=tot]{\tps};
\addplot[blue, thick]	table[x=t, y=tot]{\amm};
\legend{
TPS,
AMM,
}
\end{axis}
\end{tikzpicture}
\caption{Total energy.}
\end{subfigure}
\caption{Experiment of Section~\ref{sec:blow-up}:
Comparison of the results obtained with TPS and AMM
for $\ell=5$, $k=h_5/100$, and $\eps=h_5/100$.
(a) Evolution of the average magnetization $\langle m_3 \rangle$.
(b) Evolution of the $W^{1,\infty}$-seminorm.
(c) Evolution of the total energy.}
\label{fig:exp1_blowup2}
\end{figure}
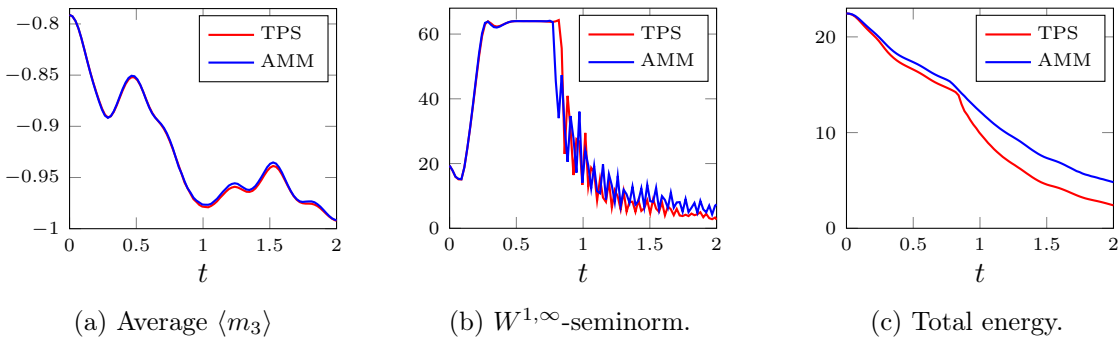
%%%%%%%%%%%%%%%%%%%%

In Figure~\ref{fig:exp1_blowup2},
we show the results obtained repeating the experiment using the same mesh $\T_{h_5}$,
but smaller time-step size $k = h_5/100$ and tolerance $\eps = h_5/100$.
The overall behavior remains the same.
However,
we see that the numerical dissipation of TPS
and the nonmonotonicity of the energy decay of AMM
are reduced.
This observation confirms the validity of the energy laws established in Proposition~\ref{prop:energy},
as the terms responsible for the two above effects
can indeed be controlled by time-step size $k$ and the tolerance $\eps$, respectively.

%%%%%%%%%%%%%%%%%%%%
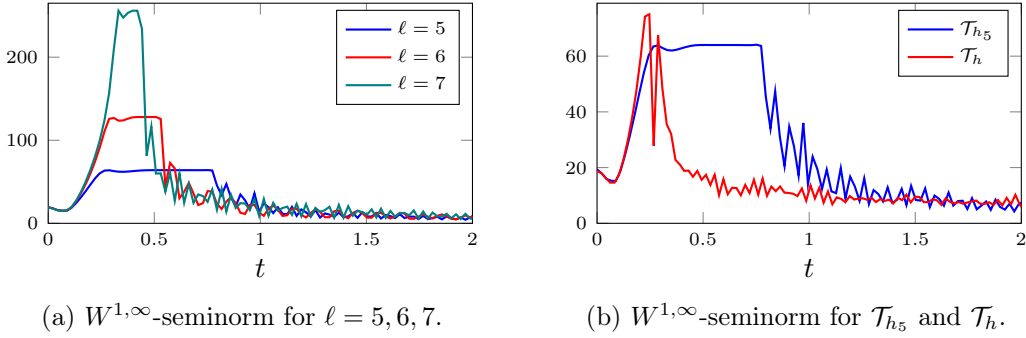
\begin{figure}[ht]
\begin{subfigure}{0.45\textwidth}
\centering
\begin{tikzpicture}
\pgfplotstableread{data/exp1/amm_h5_k001_e001.dat}{\coarse}
\pgfplotstableread{data/exp1/amm_h6_k001_e001.dat}{\medium}
\pgfplotstableread{data/exp1/amm_h7_k001_e001.dat}{\fine}
\begin{axis}
[
width = \textwidth, height=4.5cm,
xlabel = {$t$},
xmin = 0,
xmax = 2,
ymin = 0,
ymax = 265,
legend pos= north east,
legend cell align= left,
]
\addplot[blue, thick]	table[x=t, y=W1infty]{\coarse};
\addplot[red, thick]	table[x=t, y=W1infty]{\medium};
\addplot[teal, thick]	table[x=t, y=W1infty]{\fine};
\legend{
$\ell = 5$,
$\ell = 6$,
$\ell = 7$,
}
\end{axis}
\end{tikzpicture}
\caption{$W^{1,\infty}$-seminorm for $\ell=5,6,7$.}
\end{subfigure}
%%%
\begin{subfigure}{0.45\textwidth}
\centering
\begin{tikzpicture}
\pgfplotstableread{data/exp1/amm_h5_k001_e001.dat}{\struct}
\pgfplotstableread{data/exp1/amm_netgen_k001_e001.dat}{\unstruct}
\begin{axis}
[
width = \textwidth, height=4.5cm,
xlabel = {$t$},
xmin = 0,
xmax = 2,
ymin = 0,
ymax = 79,
legend pos= north east,
legend cell align= left,
]
\addplot[blue, thick]	table[x=t, y=W1infty]{\struct};
\addplot[red, thick]	table[x=t, y=W1infty]{\unstruct};
\legend{
$\T_{h_5}$,
$\T_h$,
}
\end{axis}
\end{tikzpicture}
\caption{$W^{1,\infty}$-seminorm for $\T_{h_5}$ and $\T_h$.}
\end{subfigure}
\caption{Experiment of Section~\ref{sec:blow-up}:
Evolution of the $W^{1,\infty}$-seminorm
obtained with AMM for different mesh resolutions
(a) and different symmetry properties (b).}
\label{fig:exp1_mesh_comparison}
\end{figure}
%%%%%%%%%%%%%%%%%%%%

Finally,
we investigate whether the resolution or the symmetry of the mesh
have an influence on the detection of the blow-up.
In Figure~\ref{fig:exp1_mesh_comparison}(a),
we compare the evolution of the $W^{1,\infty}$-seminorm
obtained using the uniform meshes $\T_{h_\ell}$
for $\ell = 5,6,7$ ($k = h_\ell/100$ and $\eps = h_\ell/100$).
The appearance of blow-up of the three approximations occurs at the same time ($t \approx 0.3$),
but the length of the plateaux,
i.e., the duration of the configuration in which the magnetization of the origin
has the opposite direction of the surrounding vertices,
decreases with the mesh size.
Moreover, for all $\ell=5,6,7$, the maximum value attained by the $W^{1,\infty}$-seminorm
is always the maximum value~\eqref{eq:Winfty} allowed by the discrete space.

In Figure~\ref{fig:exp1_mesh_comparison}(b),
we compare the evolution of the $W^{1,\infty}$-seminorm computed
using $\T_{h_5}$
with the one obtained using an unstructured mesh $\T_h$
of comparable number of elements and mesh size
(\num{2396} elements and $h=$ \num{0.0456}).
We observe that a finite-time blow-up at $t \approx 0.3$ occurs also
for the approximation residing in the unstructured mesh $\T_h$.
However, the magnetization configuration with maximum gradient is
quickly left (no plateau in the evolution of the $W^{1,\infty}$-seminorm).
We believe that the stagnation of the configuration with maximum gradient
observed for uniform meshes is a numerical artifact related to their symmetry.

Since the computations performed with TPS lead to the same conclusions,
in order not to overload the plots,
in Figure~\ref{fig:exp1_mesh_comparison}
we have shown only the results computed using AMM.

It is not clear to the author whether the observed finite-time blow-up
also occurs for the weak solution of iLLG towards which the computed approximations
converge as $h,k,\eps \to 0$.
However, the fact that
the phenomenon has been observed for approximations computed using two different schemes
and various choices of the discretization parameters
seems to provide a clear evidence in this direction.

%%%%%%%%%%%%%%%%%%%%
\subsection{Nutation dynamics in ferromagnetic thin films} \label{sec:nutation}
%%%%%%%%%%%%%%%%%%%%

With this experiment, we aim to illustrate the differences
in the magnetization dynamics induced by the standard LLG~\eqref{eq:LLG} and iLLG~\eqref{eq:LLG-new}.
Moreover, we compare the performance of TPS and AMM in a simulation with physically relevant geometry and material parameters.

The domain $\Omega$ is a planar thin film of the form $\omega \times (0,c)$
with cross section $\omega \subset \R^2$ (parallel to the $x_1 x_2$-plane)
and thickness $c =$ \SI{3}{\nano\meter} (aligned with $\ee_3$).
The cross section $\omega$ is an elliptic domain with semiaxis lengths $a=$ \SI{100}{\nano\meter} and $b=$ \SI{50}{\nano\meter}.
The axes of the ellipse are parallel to $\ee_1$ and $\ee_2$,
with the major axis being parallel to $\ee_1$.
It is well known~\cite{gj1997,carbou2001,difratta2020} that the magnetization of thin films is usually homogeneous in the out-of-plane component,
so that an adequate description of the energetics of the magnet can be obtained using a 2D energy functional $\E[\mm]$
posed only on the cross section $\omega$.
Accordingly, we consider the energy functional
\begin{equation*}
\E[\mm] / c
=
A \int_{\omega} \abs{\Grad\mm}^2
+
K \int_{\omega} [1 - (\mm\cdot\ee_1)^2]
-
\mu_0 \Ms \int_{\omega} \Hext\cdot\mm 
+
\frac{\mu_0 \Ms^2}{2} \int_{\omega} (\mm\cdot\ee_3)^2.
\end{equation*}
Here, $\mu_0 = 4 \pi\cdot$\SI{e-7}{\newton\per\ampere\squared} is the vacuum permeability,
$\Ms$, $A$, $K$ are positive material parameters (see below),
while $\Hext$ denotes an applied magnetic field (in \si{\ampere\per\meter}).
The four energy contributions in $\E[\mm]$ are
exchange interaction, uniaxial anisotropy (with easy axis $\ee_1$ parallel to the major axis of the ellipse),
Zeeman energy, and magnetostatic interaction, respectively.
Note that the nonlocal magnetostatic energy is replaced by a local planar anisotropy contribution penalizing out-of-plane magnetization configurations,
which is admissible for magnetic thin films~\cite{gj1997,carbou2001,difratta2020}.
In~\eqref{eq:LLG}--\eqref{eq:LLG-new},
for the gyromagnetic ratio,
we consider the value
$\gamma_0 =$ \SI{2.211e5}{\meter\per\ampere\per\second},
while the effective field $\Heff[\mm]$ is related to the energy $\E[\mm]$
via the relation $\mu_0 \Ms \, \Heff[\mm] = -\frac{\delta\E[\mm]}{\delta\mm}$.
For the material parameters, we use the values of permalloy (see, e.g., \cite{MUMAG}):
$\Ms =$ \SI{8e5}{\ampere\per\meter},
$A =$ \SI{1.3e-11}{\joule\per\meter},
$K =$ \SI{5e2}{\joule\per\meter\squared},
and $\alpha =$ \num{0.023}.
For the angular momentum relaxation time $\tau$ in~\eqref{eq:LLG-new},
we consider the value
$\tau = \alpha\,\xi$ with $\xi =$ \SI{12.3}{\pico\second}; see~\cite{inertia2020}.
As initial conditions, we consider the constant fields 
$\mm^0 \equiv \ee_1$ and $\vv^0 \equiv \0$.
Note that $\mm^0$ is a global minimum for the energy $\E[\mm]$ if $\Hext \equiv \0$.

%%%%%%%%%%%%%%%%%%%%
\begin{figure}[h]
\begin{subfigure}{0.48\textwidth}
\centering
\begin{tikzpicture}
\begin{axis}
[
width = \textwidth, height=4.5cm,
samples=100,
xlabel = {$t$ [\si{\pico\second}]},
xmin = -0.5,
xmax = 30.5,
]
\addplot+[no markers,thick,domain=0:2,blue] {0.01*sin(2*pi*28.65*x)};
\addplot+[no markers,thick,domain=2:30,blue] {0};
\end{axis}
\end{tikzpicture}
\caption{Pulse field amplitude.}
\end{subfigure}
%%%
\hfill
%%%
\begin{subfigure}{0.48\textwidth}
\centering
\begin{tikzpicture}
\pgfplotstableread{data/exp2/llg_tps_001fs.dat}{\llg}
\pgfplotstableread{data/exp2/illg_tps_001fs.dat}{\illg}
\begin{axis}
[
width = \textwidth, height=4.5cm,
xlabel = {$t$ [\si{\pico\second}]},
xmin = -0.5,
xmax = 30.5,
legend pos= south east,
legend cell align= left,
]
\addplot[blue, thick]	table[x=t, y=m3]{\llg};
\addplot[red, thick]	table[x=t, y=m3]{\illg};
\legend{
LLG,
iLLG,
}
\end{axis}
\end{tikzpicture}
\caption{LLG dynamics vs.\ iLLG dynamics.}
\end{subfigure} \\
%%%
\vspace*{5mm}
%%%
\begin{subfigure}{0.48\textwidth}
\centering
\begin{tikzpicture}
\pgfplotstableread{data/exp2/illg_tps_001fs.dat}{\one}
\pgfplotstableread{data/exp2/illg_tps_010fs.dat}{\ten}
\pgfplotstableread{data/exp2/illg_tps_100fs.dat}{\hundred}
\begin{axis}
[
width = \textwidth, height=4.5cm,
xlabel = {$t$ [\si{\pico\second}]},
xmin = -0.5,
xmax = 30.5,
legend pos= south east,
legend cell align= left,
]
\addplot[blue, thick]	table[x=t, y=m3]{\one};
\addplot[red, thick]	table[x=t, y=m3]{\ten};
\addplot[teal, thick]	table[x=t, y=m3]{\hundred};
\legend{
$\Delta t =$ \SI{1}{\femto\second},
$\Delta t =$ \SI{10}{\femto\second},
$\Delta t =$ \SI{100}{\femto\second},
}
\end{axis}
\end{tikzpicture}
\caption{iLLG dynamics computed with TPS for different time-step sizes.}
\end{subfigure}
%%%
\hfill
%%%
\begin{subfigure}{0.48\textwidth}
\centering
\begin{tikzpicture}
\pgfplotstableread{data/exp2/illg_amm_001fs.dat}{\one}
\pgfplotstableread{data/exp2/illg_amm_010fs.dat}{\ten}
\pgfplotstableread{data/exp2/illg_amm_100fs.dat}{\hundred}
\begin{axis}
[
width = \textwidth, height=4.5cm,
xlabel = {$t$ [\si{\pico\second}]},
xmin = -0.5,
xmax = 30.5,
legend cell align= left,
legend pos= south east,
]
\addplot[blue, thick]	table[x=t, y=m3]{\one};
\addplot[red, thick]	table[x=t, y=m3]{\ten};
\addplot[teal, thick]	table[x=t, y=m3]{\hundred};
\legend{
$\Delta t =$ \SI{1}{\femto\second},
$\Delta t =$ \SI{10}{\femto\second},
$\Delta t =$ \SI{100}{\femto\second},
}
\end{axis}
\end{tikzpicture}
\caption{iLLG dynamics computed with AMM for different time-step sizes.}
\end{subfigure}
\caption{Experiment of Section~\ref{sec:nutation}:
(a) Plot of the function $t \mapsto F(t)$ which modulates the amplitude of the pulse field.
(b) Evolution of $\langle m_3 \rangle$ for LLG and iLLG computed with TPS for $\Delta t =$ \SI{1}{\femto\second}.
(c) Evolution of $\langle m_3 \rangle$ computed with TPS for different time-step sizes.
(d) Evolution of $\langle m_3 \rangle$ computed with AMM for different time-step sizes.
}
\label{fig:nutation}
\end{figure}
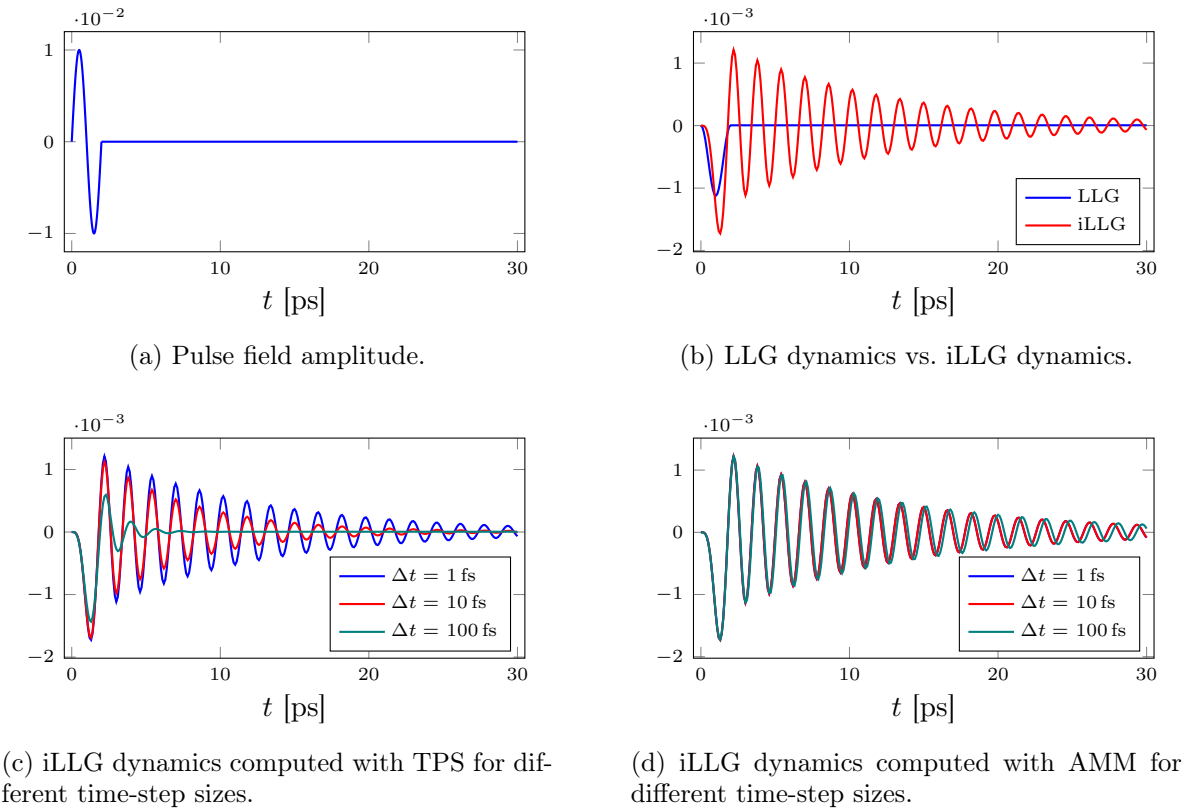
%%%%%%%%%%%%%%%%%%%%

The overall simulation time is \SI{30}{\pico\second}.
The experiment consists in perturbing the equilibrium state $\mm^0 \equiv \ee_1$
with a perpendicular and spatially uniform high-frequency pulse field $\Hext (t) = F(t) \Ms \, \ee_2$,
where $F(t) = 0.01 \, \sin(2 \pi f t) \, \raisebox{1pt}{$\chi$}_{\{ 0 \, \le \, t \, \le \, \num{2e-12} \}}(t)$
with $f =$ \SI{500}{\giga\hertz};
see Figure~\ref{fig:nutation}(a).
In order to assess the resulting magnetization dynamics, we analyze
the time evolution of the spatial average of the third magnetization component
$\langle m_3 \rangle$.

For the spatial discretization we consider a triangular mesh of $\omega$
made of \num{5998} elements.
Its mesh size (\SI{3.760}{\nano\meter}) is well below the exchange length of the material
$\ell_{\mathrm{ex}} = \sqrt{2A / (\mu_0 \Ms^2)} =$ \SI{5.686}{\nano\meter}.
For the time discretization, we consider three different time-step sizes
($\Delta t =$ \num{1}, \num{10}, \SI{100}{\femto\second}).
In the stopping criterion~\eqref{eq:mpsx_stopping}, we use the tolerance $\eps =$ \num{1e-6}.

In Figure~\ref{fig:nutation}(b), we compare the
evolution of $\langle m_3 \rangle$ for LLG and iLLG.
We show the results computed using TPS with $\Delta t =$ \SI{1}{\femto\second}.
Note that TPS for LLG can be obtained from Algorithm~\ref{alg:tps}
by omitting the first term on the left-hand side of~\eqref{eq:tps1};
see~\cite{alouges2008a,bffgpprs2014}.
The dynamics induced by the two models are completely different.
For LLG, the magnetization reacts to the pulse field and returns straight
to the equilibrium state.
For iLLG,
the deflection from the equilibrium state gives rise to oscillations with approximately the
same frequency of the inducing pulse field (\SI{500}{\giga\hertz}).
Due to damping, the amplitude of the oscillations decays with time
and the magnetization regains the initial equilibrium state.
This experiment provides a numerical evidence of the inertial nutation dynamics predicted by the model,
which has been experimentally observed only very recently~\cite{inertia2020}.

In Figure~\ref{fig:nutation}(c), we plot the evolution of $\langle m_3 \rangle$ computed
using TPS with $\Delta t =$ \num{1}, \num{10}, \SI{100}{\femto\second}.
For larger time-step sizes,
we observe a faster decay of the oscillations.
This phenomenon is a consequence of the artificial damping 
of the backward Euler method used for the time discretization.
The observed dependence on $\Delta t$ reflects the fact that the
artificial damping can be controlled by the time-step size; see~\eqref{eq:tps:energy}.
Finally, in Figure~\ref{fig:nutation}(d),
we show the same plot for AMM.
Unlike TPS, AMM is robust with respect to variations of the time-step size.
This reflects the energy conservation properties of the symplectic midpoint rule;
see \eqref{eq:mps:energy}--\eqref{eq:mpsx:energy}.
All the considered time-step sizes are sufficiently small to guarantee
the convergence of the fixed-point iteration,
which requires 1-2 iterations for $\Delta t =$ \num{1}, \SI{10}{\femto\second}
and 2-3 iterations for $\Delta t =$ \SI{100}{\femto\second}.
Note that using larger time-step sizes is not advisable,
as they cannot resolve the pulse field and the resulting magnetization dynamics.

This experiment shows the importance of designing a numerical scheme
which respects the energy law of the underlying model.
This general statement, which holds true for any PDE with a physical background (and, in particular, for LLG),
is a key aspect for iLLG due to the small extent and the ultrafast time scale
of the nutation dynamics.

%%%%%%%%%%%%%%%%%%%%
\section{Proofs} \label{sec:proof}
%%%%%%%%%%%%%%%%%%%%

In this section,
we present the proofs of the results stated in Section~\ref{sec:algorithm}.
In view of their later use, we recall some facts:
the mass-lumped product $\inner[h]{\cdot}{\cdot}$ defined in~\eqref{eq:mass-lumping}
is a scalar product on $\S^1(\T_h)^3$ and
the induced norm $\norm[h]{\cdot}$ satisfies the norm equivalence
\begin{equation*}
\norm[\LL^2(\Omega)]{\pphi_h}
\leq \norm[h]{\pphi_h}
\leq \sqrt{d+2} \, \norm[\LL^2(\Omega)]{\pphi_h}
\quad \text{for all } \pphi_h \in \S^1(\T_h)^3.
\end{equation*}
Moreover, there holds the error estimate
\begin{equation} \label{eq:h-scalar-product}
\lvert \inner{\pphi_h}{\ppsi_h} -  \inner[h]{\pphi_h}{\ppsi_h} \rvert
\le C h^2 \norm[\LL^2(\Omega)]{\Grad\pphi_h} \norm[\LL^2(\Omega)]{\Grad\ppsi_h}
\quad \text{for all } \pphi_h, \ppsi_h \in \S^1(\T_h)^3,
\end{equation}
where $C>0$ depends only on the shape-regularity of $\T_h$;
see~\cite[Lemma~3.9]{bartels2015}.
Finally, we recall
the following relations
between the $L^p$-norm of a discrete function
and the $\ell^p$-norm of the vector collecting its nodal values
(see~\cite[Lemma~3.4]{bartels2015}):
\begin{equation} \label{eq:normEquivalence}
\norm[\LL^p(\Omega)]{\pphi_h}^p \simeq \sum_{z \in \Nh} h_z^d \abs{\pphi_h(z)}^p
\quad
\text{and}
\quad
\norm[\LL^{\infty}(\Omega)]{\pphi_h} = \max_{z \in \Nh} \abs{\pphi_h(z)}
\quad
\text{for all } \pphi_h \in \S^1(\T_h)^3.
\end{equation}
Here, $h_z>0$ denotes the diameter of the nodal patch associated with $z \in \Nh$.

%%%%%%%%%%%%%%%%%%%%
\subsection{Properties of Algorithm~\ref{alg:mps} and well-posedness of Algorithm~\ref{alg:mpsx}}
\label{sec:solvability}
%%%%%%%%%%%%%%%%%%%%

We start with proving the conservation properties of Algorithm~\ref{alg:mps}.

%%%%%%%%%%%%%%%%%%%%
\begin{proof}[Proof of Proposition~\ref{prop:conservation}]
Let $z \in \Nh$.
Choosing $\pphi_h = \vphi_{z} \mm_h^{i+1/2}(z)$
in~\eqref{eq:mps1}, we infer that
$\abs{\mm_h^{i+1}(z)} = \abs{\mm_h^i(z)}$.
Since $\mm_h^0 \in \Mh$ by assumption, we conclude that $\mm_h^{i+1} \in \Mh$.

Choosing $\pphi_h = \vphi_{z} \ww_h^{i+1/2}(z)$
in~\eqref{eq:mps1}
and
$\ppsi_h = \vphi_{z} \mm_h^{i+1/2}(z)$
in~\eqref{eq:mps2}, we obtain the identities
$d_t\mm_h^{i+1}(z) \cdot \ww_h^{i+1/2}(z) = 0$
and
$d_t\ww_h^{i+1}(z) \cdot \mm_h^{i+1/2}(z)$,
respectively.
It follows that
\begin{equation*}
\mm_h^{i+1}(z) \cdot \ww_h^{i+1}(z)
-
\mm_h^i(z) \cdot \ww_h^i(z)
=
d_t\mm_h^{i+1}(z) \cdot \ww_h^{i+1/2}(z)
+
d_t\ww_h^{i+1}(z) \cdot \mm_h^{i+1/2}(z)
= 0.
\end{equation*}
Since
$\mm_h^0 (z) \cdot \ww_h^0 (z) = \mm_h^0 (z) \cdot (\mm_h^0 (z) \times \vv_h^0 (z) ) = 0$,
we conclude that
$\ww_h^{i+1} \in \Kh[\mm_h^{i+1}]$.
\end{proof}
%%%%%%%%%%%%%%%%%%%%

Next, we show that the fixed-point iteration designed for the solution of~\eqref{eq:mpsx}
is well-posed and converges.

%%%%%%%%%%%%%%%%%%%%
\begin{proof}[Proof of Proposition~\ref{prop:fixed-point}]
Let $\ell \in \N_0$.
The bilinear forms on the left-hand side of both~\eqref{eq:mpsx1} and~\eqref{eq:mpsx2}
are elliptic.
Therefore, existence and uniqueness of solutions
$\uu_h^{i,\ell+1}$ and $\zz_h^{i,\ell+1}$ in $\S^1(\T_h)^3$
follow from the Lax--Milgram theorem.

Let $z \in \Nh$ be an arbitrary vertex.
Testing~\eqref{eq:mpsx1} with $\pphi_h = \vphi_{z} \uu_h^{i,\ell+1}(z) \in \S^1(\T_h)^3$,
we obtain that
$\abs{\uu_h^{i,\ell+1}(z)}^2 = \uu_h^{i,\ell+1}(z) \cdot \mm_h^i(z)$.
Hence, $\abs{\uu_h^{i,\ell+1}(z)} \leq \abs{\mm_h^i(z)} = 1$.
This shows that $\norm[\LL^{\infty}(\Omega)]{\uu_h^{i,\ell+1}} \le 1$
and concludes the proof of part~{\rm(i)}.

Let $\uu_h^{i,\ell+1}$ and $\uu_h^{i,\ell+2}$
(resp., $\zz_h^{i,\ell+1}$ and $\zz_h^{i,\ell+2}$)
be two consecutive iterates
satisfying~\eqref{eq:mpsx1}
(resp., \eqref{eq:mpsx2}).
Taking the difference of the equations satisfied by
$\zz_h^{i,\ell+2}$ and $\zz_h^{i,\ell+1}$
and choosing $\ppsi_h = \zz_h^{i,\ell+2} - \zz_h^{i,\ell+1}$,
we obtain the identity
\begin{equation} \label{eq:aux1}
\begin{split}
& 2 \tau \norm[h]{\zz_h^{i,\ell+2}-\zz_h^{i,\ell+1}}^2 \\
& \quad = k \inner[h]{\uu_h^{i,\ell+2} \times \Ph\heff[\uu_h^{i,\ell+2}] - \uu_h^{i,\ell+1} \times \Ph\heff[\uu_h^{i,\ell+1}]}{\zz_h^{i,\ell+2}-\zz_h^{i,\ell+1}} \\
& \qquad + 2 \alpha \inner[h]{(\uu_h^{i,\ell+2} - \uu_h^{i,\ell+1}) \times \mm_h^i}{\zz_h^{i,\ell+2}-\zz_h^{i,\ell+1}} \\
& \qquad - k \inner[h]{\uu_h^{i,\ell+2} \times \zz_h^{i,\ell+2} - \uu_h^{i,\ell+1} \times \zz_h^{i,\ell+1}}{\zz_h^{i,\ell+2}-\zz_h^{i,\ell+1}}.
\end{split}
\end{equation}
Taking the difference of the equations satisfied by
$\uu_h^{i,\ell+2}$ and $\uu_h^{i,\ell+1}$
and choosing the test functions
$\pphi_h = \uu_h^{i,\ell+2} - \uu_h^{i,\ell+1}$
and $\pphi_h = \zz_h^{i,\ell+2} - \zz_h^{i,\ell+1}$,
we obtain the identities
\begin{gather}
 \label{eq:aux2}
2\norm[h]{\uu_h^{i,\ell+2}-\uu_h^{i,\ell+1}}^2
= - k \inner[h]{\uu_h^{i,\ell+1} \times (\zz_h^{i,\ell+1} - \zz_h^{i,\ell})}{\uu_h^{i,\ell+2}-\uu_h^{i,\ell+1}},\\
\label{eq:aux3}
2\inner[h]{\uu_h^{i,\ell+2}-\uu_h^{i,\ell+1}}{\zz_h^{i,\ell+2} - \zz_h^{i,\ell+1}}
+ k \inner[h]{\uu_h^{i,\ell+2} \times \zz_h^{i,\ell+1} - \uu_h^{i,\ell+1} \times \zz_h^{i,\ell}}{\zz_h^{i,\ell+2} - \zz_h^{i,\ell+1}}
= 0.
\end{gather}
From~\eqref{eq:aux2}, since $\norm[\LL^{\infty}(\Omega)]{\uu_h^{i,\ell+1}} \le 1$ from part~{\rm(i)}, we deduce that
\begin{equation} \label{eq:aux4}
\norm[h]{\uu_h^{i,\ell+2}-\uu_h^{i,\ell+1}}
\leq \frac{k}{2} \norm[h]{\zz_h^{i,\ell+1} - \zz_h^{i,\ell}}.
\end{equation}
Combining~\eqref{eq:aux1} and~\eqref{eq:aux3}, we obtain that
\begin{equation*}
\begin{split}
& 2 \tau \norm[h]{\zz_h^{i,\ell+2}-\zz_h^{i,\ell+1}}^2 \\
& \quad
\stackrel{\eqref{eq:aux1}}{=}
k \inner[h]{(\uu_h^{i,\ell+2} - \uu_h^{i,\ell+1}) \times \Ph\heff[\uu_h^{i,\ell+2}]}{\zz_h^{i,\ell+2}-\zz_h^{i,\ell+1}} \\
& \qquad +
k \inner[h]{\uu_h^{i,\ell+1} \times (\Ph\heff[\uu_h^{i,\ell+2}]-\Ph\heff[\uu_h^{i,\ell+1}])}{\zz_h^{i,\ell+2}-\zz_h^{i,\ell+1}} \\
& \qquad
+ 2 \alpha \inner[h]{(\uu_h^{i,\ell+2} - \uu_h^{i,\ell+1}) \times \mm_h^i}{\zz_h^{i,\ell+2}-\zz_h^{i,\ell+1}} \\
& \qquad
- k \inner[h]{\uu_h^{i,\ell+2} \times \zz_h^{i,\ell+2} - \uu_h^{i,\ell+1} \times \zz_h^{i,\ell+1}}{\zz_h^{i,\ell+2}-\zz_h^{i,\ell+1}} \\
& \quad
\stackrel{\eqref{eq:aux3}}{=}
k \inner[h]{(\uu_h^{i,\ell+2} - \uu_h^{i,\ell+1}) \times \Ph\heff[\uu_h^{i,\ell+2}]}{\zz_h^{i,\ell+2}-\zz_h^{i,\ell+1}} \\
& \qquad + k \inner[h]{\uu_h^{i,\ell+1} \times (\Ph\heff[\uu_h^{i,\ell+2}]-\Ph\heff[\uu_h^{i,\ell+1}])}{\zz_h^{i,\ell+2}-\zz_h^{i,\ell+1}} \\
& \qquad
+ 2 \alpha \inner[h]{(\uu_h^{i,\ell+2} - \uu_h^{i,\ell+1}) \times \mm_h^i}{\zz_h^{i,\ell+2}-\zz_h^{i,\ell+1}} \\
& \qquad
- k \inner[h]{\uu_h^{i,\ell+2} \times \zz_h^{i,\ell+2} - \uu_h^{i,\ell+1} \times \zz_h^{i,\ell+1}}{\zz_h^{i,\ell+2}-\zz_h^{i,\ell+1}} \\
& \qquad
+ 2\inner[h]{\uu_h^{i,\ell+2}-\uu_h^{i,\ell+1}}{\zz_h^{i,\ell+2} - \zz_h^{i,\ell+1}}
+ k \inner[h]{\uu_h^{i,\ell+2} \times \zz_h^{i,\ell+1} - \uu_h^{i,\ell+1} \times \zz_h^{i,\ell}}{\zz_h^{i,\ell+2} - \zz_h^{i,\ell+1}} \\
& \quad
\stackrel{\phantom{\eqref{eq:aux3}}}{=}
k \inner[h]{(\uu_h^{i,\ell+2} - \uu_h^{i,\ell+1}) \times \Ph\heff[\uu_h^{i,\ell+2}]}{\zz_h^{i,\ell+2}-\zz_h^{i,\ell+1}} \\
& \qquad + k \inner[h]{\uu_h^{i,\ell+1} \times (\Ph\heff[\uu_h^{i,\ell+2}]-\Ph\heff[\uu_h^{i,\ell+1}])}{\zz_h^{i,\ell+2}-\zz_h^{i,\ell+1}} \\
& \qquad
+ 2 \alpha \inner[h]{(\uu_h^{i,\ell+2} - \uu_h^{i,\ell+1}) \times \mm_h^i}{\zz_h^{i,\ell+2}-\zz_h^{i,\ell+1}} \\
& \qquad
+ 2\inner[h]{\uu_h^{i,\ell+2}-\uu_h^{i,\ell+1}}{\zz_h^{i,\ell+2} - \zz_h^{i,\ell+1}}
+ k \inner[h]{\uu_h^{i,\ell+1} \times (\zz_h^{i,\ell+1} - \zz_h^{i,\ell})}{\zz_h^{i,\ell+2} - \zz_h^{i,\ell+1}}.
\end{split}
\end{equation*}
It follows that
\begin{equation*}
\begin{split}
& 2 \tau \norm[h]{\zz_h^{i,\ell+2}-\zz_h^{i,\ell+1}} \\
& \quad \le
\big(2+ 2 \alpha \norm[\LL^{\infty}(\Omega)]{\mm_h^i} + k \norm[\LL^{\infty}(\Omega)]{\Ph\heff[\uu_h^{i,\ell+2}]}\big)
\norm[h]{\uu_h^{i,\ell+2} - \uu_h^{i,\ell+1}} \\
& \qquad +
k \norm[\LL^{\infty}(\Omega)]{\uu_h^{i,\ell+1}}
\norm[h]{\Ph\heff[\uu_h^{i,\ell+2}]-\Ph\heff[\uu_h^{i,\ell+1}]}
+ k \norm[\LL^{\infty}(\Omega)]{\uu_h^{i,\ell+1}}
\norm[h]{\zz_h^{i,\ell+1} - \zz_h^{i,\ell}}.
\end{split}
\end{equation*}
Using that $\norm[\LL^{\infty}]{\mm_h^i} = 1$
and $\norm[\LL^{\infty}]{\uu_h^{i,\ell+1}} \le 1$,
together with the estimates~\cite{bp2006,bartels2015a}
\begin{align*}
\norm[\LL^{\infty}]{\Ph\heff[\uu_h^{i,\ell+2}]} &\le \Cinv^2 \hmin^{-2}\norm[\LL^{\infty}]{\uu_h^{i,\ell+2}},\\
\norm[h]{\Ph\heff[\uu_h^{i,\ell+2}]} &\le \Cinv^2 \hmin^{-2} \norm[h]{\uu_h^{i,\ell+2}}
\end{align*}
(where $\Cinv>0$ depends only on the shape-regularity of $\T_h$)
and~\eqref{eq:aux4},
we obtain~\eqref{eq:contraction} with
$q = [(2 + \alpha + \tau)k + \Cinv^2 k^2 \hmin^{-2}] / (2 \tau)$.
Hence,
if $k < k_0 := \tau/(2 + \alpha + \tau)$
and $k < C \hmin$ (with $C = \sqrt{\tau}/\Cinv$),
then $0 < q < 1$.
This proves part~{\rm(ii)}.

By construction,
$\mm_h^{i+1} = 2 \uu_h^{i,\ell_i+1} - \mm_h^i$ satisfies
\begin{equation*}
\inner[h]{d_t\mm_h^{i+1}}{\pphi_h}
=  \inner[h]{\mm_h^{i+1/2} \times \zz_h^{i,\ell}}{\pphi_h}
\quad
\text{for all } \pphi_h \in \S^1(\T_h)^3,
\end{equation*}
while $\ww_h^{i+1} = 2 \zz_h^{i,\ell_i+1} - \ww_h^i$ satisfies~\eqref{eq:mps2}.
The argument used to prove Proposition~\ref{prop:conservation}
shows that $\mm_h^{i+1} \in \Mh$ and $\ww_h^{i+1} \in \Kh[\mm_h^{i+1}]$.
This shows part~{\rm(iii)} and concludes the proof.
\end{proof}
%%%%%%%%%%%%%%%%%%%%

%%%%%%%%%%%%%%%%%%%%
\subsection{Discrete energy laws and stability} \label{sec:stability}
%%%%%%%%%%%%%%%%%%%%

We prove the discrete energy laws satisfied by the approximations generated by the algorithms.

%%%%%%%%%%%%%%%%%%%%
\begin{proof}[Proof of Proposition~\ref{prop:energy}]
Let $i \in \N_0$.
We test~\eqref{eq:tps1} with $\pphi_h=\vv_h^{i+1} \in \Kh[\mm_h^i]$
and multiply the resulting equation by $k$.
We obtain the identity
\begin{equation*}
\tau \inner[h]{\vv_h^{i+1}-\vv_h^i}{\vv_h^{i+1}}
+ \alpha k \norm[h]{\vv_h^{i+1}}^2
+ k^2 \norm[\LL^2(\Omega)]{\Grad\vv_h^{i+1}}^2 \\
= - k \inner{\Grad\mm_h^i}{\Grad\vv_h^{i+1}}
\end{equation*}
Since the angle condition~\eqref{eq:angleCondition} is satisfied,
\cite[Lemma~3.2]{bartels2005} yields that
\begin{equation} \label{eq:nodalProjectionEnergy}
\norm[\LL^2(\Omega)]{\Grad\mm_h^{i+1}}
\le
\norm[\LL^2(\Omega)]{\Grad\mm_h^i + k \Grad\vv_h^{i+1}}.
\end{equation}
Hence, we obtain that
\begin{equation*}
\begin{split}
\E[\mm_h^{i+1}]
\stackrel{\eqref{eq:nodalProjectionEnergy}}{\leq}
\frac{1}{2} \norm[\LL^2(\Omega)]{\Grad\mm_h^i + k \Grad\vv_h^{i+1}}^2
= \E[\mm_h^i]
+ k \inner{\Grad\mm_h^i}{\Grad\vv_h^{i+1}}
+ \frac{k^2}{2} \norm[\LL^2(\Omega)]{\Grad\vv_h^{i+1}}^2.
\end{split}
\end{equation*}
Altogether, we obtain that
\begin{equation*}
\E[\mm_h^{i+1}]
+ \tau \inner[h]{\vv_h^{i+1}-\vv_h^i}{\vv_h^{i+1}}
+ \alpha k \norm[h]{\vv_h^{i+1}}^2
+ \frac{k^2}{2} \norm[\LL^2(\Omega)]{\Grad\vv_h^{i+1}}^2
\leq \E[\mm_h^i].
\end{equation*}
Applying the vector identity
\begin{equation*}
(\vec{a}-\vec{b})\cdot\vec{a}
= \frac{1}{2}\abs{\vec{a}}^2
- \frac{1}{2}\abs{\vec{b}}^2
+ \frac{1}{2}\abs{\vec{a} - \vec{b}}^2
\quad
\text{for all } \vec{a},\vec{b} \in\R^3
\end{equation*}
to the second term on the left-hand side yields~\eqref{eq:tps:energy}.
This proves part~{\rm(i)}.

To show the stability of Algorithm~\ref{alg:mps},
we choose
$\pphi_h = d_t \mm_h^{i+1}$ in~\eqref{eq:mps1},
$\pphi_h = \Ph\heff[\mm_h^{i+1/2}]$ in~\eqref{eq:mps1},
and
$\ppsi_h = \ww_h^{i+1/2}$ in~\eqref{eq:mps2}
to obtain the identities
\begin{gather*}
\norm[h]{d_t\mm_h^{i+1}}^2
=
- \inner[h]{\mm_h^{i+1/2} \times \ww_h^{i+1/2}}{d_t\mm_h^{i+1}}, \\
\inner[h]{d_t\mm_h^{i+1}}{\Ph\heff[\mm_h^{i+1/2}]}
=
- \inner[h]{\mm_h^{i+1/2} \times \ww_h^{i+1/2}}{\Ph\heff[\mm_h^{i+1/2}]}, \\
 \tau \inner[h]{d_t\ww_h^{i+1}}{\ww_h^{i+1/2}}
=
\inner[h]{\mm_h^{i+1/2} \times \Ph\heff[\mm_h^{i+1/2}]}{\ww_h^{i+1/2}}
- \alpha \inner[h]{\mm_h^{i+1/2} \times d_t\mm_h^{i+1}}{\ww_h^{i+1/2}},
\end{gather*}
respectively.
Combining these three equations, we obtain that
\begin{equation*}
\inner[h]{d_t\mm_h^{i+1}}{\Ph\heff[\mm_h^{i+1/2}]}
=
 \tau \inner[h]{d_t\ww_h^{i+1}}{\ww_h^{i+1/2}}
+
\alpha \norm[h]{d_t\mm_h^{i+1}}^2. 
\end{equation*}
Since
\begin{gather*}
\inner[h]{\Ph \heff[\mm_h^{i + 1/2}]}{d_t \mm_h^{i+1}}
\stackrel{\eqref{eq:pseudo-projection},\eqref{eq:heff},\eqref{eq:llg:energy}}{=}
- \frac{1}{k} \big( \E[\mm_h^{i + 1}] - \E[\mm_h^i] \big), \\
\inner[h]{d_t\ww_h^{i+1}}{\ww_h^{i+1/2}}
=
\frac{1}{2k} \big( \norm[h]{\ww_h^{i+1}}^2 - \norm[h]{\ww_h^i}^2 \big),
\end{gather*}
we obtain~\eqref{eq:mps:energy}.
This proves part~{\rm(ii)}.
The proof of~\eqref{eq:mpsx:energy} from part~{\rm(iii)} can be obtained with the very same argument.
\end{proof}
%%%%%%%%%%%%%%%%%%%%

%%%%%%%%%%%%%%%%%%%%
\subsection{Convergence results} \label{sec:convergence}
%%%%%%%%%%%%%%%%%%%%

First, we prove the convergence of Algorithm~\ref{alg:tps}.

%%%%%%%%%%%%%%%%%%%%%
\begin{proof}[Proof of Theorem~\ref{thm:convergence} for Algorithm~\ref{alg:tps}]
The proof is largely based on the argument of~\cite{alouges2008a,bffgpprs2014,hpprss2019}.
We start with recalling the estimates
\begin{equation} \label{eq:geometric}
\lvert \mm_h^{i+1}(z) - \mm_h^i(z) \rvert
\le k \lvert \vv_h^{i+1}(z) \rvert
\quad
\text{and}
\quad
\lvert \mm_h^{i+1}(z) - \mm_h^i(z) - k \vv_h^{i+1}(z)\rvert
\le \frac{k^2}{2} \lvert \vv_h^{i+1}(z) \rvert^2,
\end{equation}
which hold for all $i \in \N_0$ and $z \in \Nh$; see~\cite{aj2006,bkp2008}.
The first inequality in~\eqref{eq:geometric}, together with~\eqref{eq:normEquivalence},
yields that
$\norm[h]{d_t\mm_h^{i+1}} \le \norm[h]{\vv_h^{i+1}}$
for all $i \in \N_0$.

Let $T>0$ be arbitrary.
With the uniform boundedness guaranteed by~\eqref{eq:tps:energy} and~\eqref{eq:convergence0},
we can construct
$\mm\in L^{\infty}(0,\infty;\HH^1(\Omega)) \cap W^{1,\infty}(0,\infty;\LL^2(\Omega))$,
satisfying $\abs{\mm}=1$ a.e.\ in $\Omega \times (0,\infty)$,
such that,
upon extraction of (nonrelabeled) subsequences,
there hold the convergences
$\mm_{hk}, \mm_{hk}^{\pm} \weakstarto \mm$ in $L^{\infty}(0,\infty;\HH^1(\Omega))$,
$\mm_{hk}\vert_{\Omega_T} \weakto \mm\vert_{\Omega_T}$ in $\HH^1(\Omega_T)$,
$\partial_t\mm_{hk}, \vv_{hk}^{+} \weakstarto \partial_t\mm$ in $L^{\infty}(0,\infty;\LL^2(\Omega))$,
as well as $\mm_{hk}, \mm_{hk}^{\pm} \to \mm$ in $\LL^2(\Omega \times (0,\infty))$
and pointwise almost everywhere in $\Omega \times (0,\infty)$.
Moreover, we have that
$k \Grad\vv_{hk}^+ \to 0$
in $L^2(0,\infty;\LL^2(\Omega))$
as $h,k \to 0$.

Let $\vvphi\in C^{\infty}_c([0,T);\CC(\overline{\Omega}))$ be an arbitrary smooth test function.
Let $N \in \N$ be the smallest integer such that $T \le kN = t_N$.
Let $i \in \{ 0, \dots, N-1\}$.
We choose the test function
$\pphi_h=\Interp[\mm_h^i \times \vvphi(t_i)]\in\Kh[\mm_h^i]$
in~\eqref{eq:tps1}
to obtain
\begin{equation*}
\begin{split}
& \tau \inner[h]{d_t\vv_h^{i+1}}{\Interp[\mm_h^i \times \vvphi(t_i)]}
+ \alpha \inner[h]{\vv_h^{i+1}}{\Interp[\mm_h^i \times \vvphi(t_i)]}
+ \inner[h]{\mm_h^i\times\vv_h^{i+1}}{\Interp[\mm_h^i \times \vvphi(t_i)]} \\
& \quad - k \inner[h]{\Ph\heff[\vv_h^{i+1}]}{\Interp[\mm_h^i \times \vvphi(t_i)]}
= \inner[h]{\Ph\heff[\mm_h^i]}{\Interp[\mm_h^i \times \vvphi(t_i)]},
\end{split}
\end{equation*}
where we extend $\vvphi$ by zero in $(T,t_N)$.
Due to the presence of the mass-lumped scalar product,
we can remove the nodal interpolant from the first three terms on the left-hand side
without affecting the value of the integrals.
Then,
multiplying the latter by $k$,
summing over $i = 0, \dots, N-1$,
and using~\eqref{eq:heff} and~\eqref{eq:pseudo-projection}, we obtain the identity
\begin{equation*}
\begin{split}
& \tau k \sum_{i=0}^{N-1} \inner[h]{d_t\vv_h^{i+1}}{\mm_h^i \times \vvphi(t_i)}
+ \alpha k \sum_{i=0}^{N-1} \inner[h]{\vv_h^{i+1}}{\mm_h^i \times \vvphi(t_i)}
+ k \sum_{i=0}^{N-1} \inner[h]{\mm_h^i\times\vv_h^{i+1}}{\mm_h^i \times \vvphi(t_i)} \\
& \quad 
= - k \sum_{i=0}^{N-1} \inner{\Grad(\mm_h^i + k \vv_h^{i+1})}{\Grad\Interp[\mm_h^i \times \vvphi(t_i)]}.
\end{split}
\end{equation*}
Next, we rewrite the first term on the left-hand side
using the summation by parts formula
\begin{equation} \label{eq:summation}
\sum_{i=0}^{N-1} (a_{i+1}-a_i) b_i
= - \sum_{i=0}^{N-1} a_{i+1} (b_{i+1}-b_i) + a_N b_N - a_0 b_0
\quad
\text{for all sequences } \{a_i\}_{i=0}^{N}, \{b_i\}_{i=0}^{N}
\end{equation}
and performing some algebraic manipulations:
\begin{equation*}
\begin{split}
& \tau k \sum_{i=0}^{N-1} \inner[h]{d_t\vv_h^{i+1}}{\mm_h^i \times \vvphi(t_i)}
\\
& \quad =
- \tau \sum_{i=0}^{N-1} \inner[h]{\vv_h^{i+1}}{\mm_h^{i+1} \times \vvphi(t_{i+1}) - \mm_h^i \times \vvphi(t_i)} \\
& \qquad
+ \tau \inner[h]{\vv_h^N}{\mm_h^N \times \vvphi(t_N)}
- \tau \inner[h]{\vv_h^0}{\mm_h^0 \times \vvphi(0)}
\\
& \quad =
- \tau k \sum_{i=0}^{N-1} \inner[h]{\vv_h^{i+1}}{d_t\mm_h^{i+1} \times \vvphi(t_{i+1})}
- \tau k \sum_{i=0}^{N-1} \inner[h]{\vv_h^{i+1}}{\mm_h^i \times d_t\vvphi(t_{i+1})} \\
& \qquad
+ \tau \inner[h]{\vv_h^N}{\mm_h^N \times \vvphi(t_N)}
- \tau \inner[h]{\vv_h^0}{\mm_h^0 \times \vvphi(0)}
\\
& \quad =
- \tau k \sum_{i=0}^{N-1} \inner[h]{\vv_h^{i+1}}{(d_t\mm_h^{i+1} - \vv_h^{i+1}) \times \vvphi(t_{i+1})}
+ \tau k \sum_{i=0}^{N-1} \inner[h]{\mm_h^i \times \vv_h^{i+1}}{d_t\vvphi(t_{i+1})} \\
& \qquad
- \tau \inner[h]{\mm_h^N \times \vv_h^N}{\vvphi(t_N)}
+ \tau \inner[h]{\mm_h^0 \times \vv_h^0}{\vvphi(0)}.
\end{split}
\end{equation*}
Using H\"older inequality,
a combination of the second inequality in~\eqref{eq:geometric}
and the norm equivalence~\eqref{eq:normEquivalence},
and inverse estimates
(see, e.g., \cite[Lemma~3.5]{bartels2015}),
the first term on the right-hand side can be estimated as
\begin{equation*}
\begin{split}
& \tau k \sum_{i=0}^{N-1} \inner[h]{\vv_h^{i+1}}{(d_t\mm_h^{i+1} - \vv_h^{i+1}) \times \vvphi(t_{i+1})} \\
& \quad \lesssim
k \sum_{i=0}^{N-1} \norm[\LL^3(\Omega)]{\vv_h^{i+1}}
\norm[\LL^{3/2}(\Omega)]{d_t \mm_h^{i+1} - \vv_h^{i+1}}
\norm[\LL^{\infty}(\Omega)]{\vvphi(t_{i+1})} \\
& \quad \lesssim k^2
\sum_{i=0}^{N-1} \norm[\LL^{3}(\Omega)]{\vv_h^{i+1}}^3
\lesssim k^2 \hmin^{-d/2}
\sum_{i=0}^{N-1} \norm[\LL^{2}(\Omega)]{\vv_h^{i+1}}^3
\lesssim k \hmin^{-d/2}.
\end{split}
\end{equation*}
Altogether,
using also the identity
\begin{equation*}
[\mm_h^i(z)\times\vv_h^{i+1}(z)]\cdot[\mm_h^i(z) \times \vvphi(z,t_i)] = \vv_h^{i+1}(z)\cdot\vvphi(z,t_i)
\quad
\text{for all } z \in \Nh
\end{equation*}
(which follows from~\eqref{eq:triple}, $\mm_h^i \in \Mh$ and $\vv_h^{i+1} \in \Kh[\mm_h^i]$)
and
observing that $\vvphi(t_N) = \0$,
we thus obtain that
\begin{equation*}
\begin{split}
& k \sum_{i=0}^{N-1} \inner[h]{\vv_h^{i+1}}{\vvphi(t_i)} \\
& \quad 
= - k \sum_{i=0}^{N-1} \inner{\Grad(\mm_h^i + k \vv_h^{i+1})}{\Grad\Interp[\mm_h^i \times \vvphi(t_i)]}
+ \alpha k \sum_{i=0}^{N-1} \inner[h]{\mm_h^i \times \vv_h^{i+1}}{\vvphi(t_i)}
\\
& \qquad - \tau k \sum_{i=0}^{N-1} \inner[h]{\mm_h^i \times \vv_h^{i+1}}{d_t\vvphi(t_{i+1})}
- \tau \inner[h]{\mm_h^0 \times \vv_h^0}{\vvphi(0)}
+ o(1).
\end{split}
\end{equation*}
Using the approximation properties of the nodal interpolant
and estimate~\eqref{eq:h-scalar-product} (see the argument of~\cite{bp2006,alouges2008a}),
we can replace all mass-lumped scalar products by $L^2$-products
and remove the nodal interpolant from the first term on the left-hand side
at the price of an error which goes to zero in the limit.
Rewriting the space-time integrals of
the resulting equation in terms of the time reconstructions~\eqref{eq:reconstructions},
we obtain that
\begin{equation*}
\begin{split}
& \int_0^T \inner{\vv_{hk}^+(t)}{\vvphi_k^-(t)} \dt \\
& \quad 
= - \int_0^T \inner{\Grad(\mm_{hk}^-(t) + k \vv_{hk}^+(t))}{\Grad(\mm_{hk}^-(t) \times \vvphi_k^-(t))} \dt
+ \alpha \int_0^T \inner{\mm_{hk}^-(t) \times \vv_{hk}^+(t)}{\vvphi_k^-(t)} \dt
\\
& \qquad - \tau \int_0^T \inner{\mm_{hk}^-(t) \times \vv_{hk}^+(t)}{\partial_t\vvphi_k(t)} \dt
- \tau \inner{\mm_h^0 \times \vv_h^0}{\vvphi(0)}
+ o(1).
\end{split}
\end{equation*}
Using the available convergence results,
we can pass to the limit as $h,k \to 0$ the latter and obtain that each term converges towards
the corresponding one in~\eqref{eq:weak:variational}.
By density, it follows that $\mm$ satisfies~\eqref{eq:weak:variational}
for all $\vvphi\in C^{\infty}_c([0,T);\HH^1(\Omega))$.
This shows that $\mm$ satisfies part~{\rm(iii)} of Definition~\ref{def:weak}.

The proof that $\mm$ attains the prescribed initial data $(\mm^0,\vv^0)$ continuously in
$\HH^1(\Omega) \times \LL^2(\Omega)$ (part~{\rm(ii)} of Definition~\ref{def:weak})
follows the argument of~\cite[page~72]{bfp2007}.
The energy inequality~\eqref{eq:weak:energy}
(part~{\rm(iv)} of Definition~\ref{def:weak})
can be obtained from the discrete energy law~\eqref{eq:tps:energy}
using the available convergence results and standard lower semicontinuity arguments.
\end{proof}
%%%%%%%%%%%%%%%%%%%%

Next, we prove the convergence result for the nonlinear angular momentum method.

%%%%%%%%%%%%%%%%%%%%
\begin{proof}[Proof of Theorem~\ref{thm:convergence} for Algorithm~\ref{alg:mps}]
The proof follows the ideas of~\cite{bp2006,kw2014,bartels2015a}.
Testing~\eqref{eq:mps1} with $\pphi_h = d_t\mm_h^{i+1}$,
we infer that $\norm[h]{d_t\mm_h^{i+1}} \le \norm[h]{\ww_h^{i+1/2}}$ for all $i \in \N_0$.

Let $T>0$ be arbitrary.
With the uniform boundedness guaranteed by~\eqref{eq:mps:energy} and~\eqref{eq:convergence0},
we can construct
$\mm\in L^{\infty}(0,\infty;\HH^1(\Omega)) \cap W^{1,\infty}(0,\infty;\LL^2(\Omega))$
and 
$\ww\in L^{\infty}(0,\infty;\LL^2(\Omega))$
such that,
upon extraction of a (nonrelabeled) subsequence,
we have the convergences
$\mm_{hk}, \mm_{hk}^+, \overline{\mm}_{hk} \weakstarto \mm$ in $L^{\infty}(0,\infty;\HH^1(\Omega))$,
$\partial_t\mm_{hk} \weakstarto \partial_t\mm$ in $L^{\infty}(0,\infty;\LL^2(\Omega))$,
$\mm_{hk}\vert_{\Omega_T} \weakto \mm\vert_{\Omega_T}$ in $\HH^1(\Omega_T)$,
$\mm_{hk}, \mm_{hk}^+, \overline{\mm}_{hk} \to \mm$ in $\LL^2(\Omega \times (0,\infty))$
and pointwise almost everywhere in $\Omega \times (0,\infty)$,
as well as
$\overline{\ww}_{hk}, \ww_{hk}^+ \weakstarto \ww$ in $L^{\infty}(0,\infty;\LL^2(\Omega))$.
Moreover, it holds that $\abs{\mm}=1$
and $\mm \cdot \ww = 0$
a.e.\ in $\Omega \times (0,\infty)$.

Let $\zzeta, \vvphi \in C^{\infty}_c([0,T);\CC(\overline{\Omega}))$ be arbitrary smooth test functions.
Let $N \in \N$ be the smallest integer such that $T \le kN = t_N$.
Let $i \in \{ 0, \dots, N-1\}$.
We choose the test function
$\pphi_h=\Interp[\zzeta(t_i)]\in\S^1(\T_h)^3$ in~\eqref{eq:mps1}
and $\ppsi_h=\Interp[\vvphi(t_i)]\in\S^1(\T_h)^3$ in~\eqref{eq:mps2}.
Multiplying the resulting equation by $k$,
summing over $i = 0, \dots, N-1$,
and using~\eqref{eq:heff} and~\eqref{eq:pseudo-projection} on the term which involves the effective field,
we obtain the identities
\begin{align*}
k \sum_{i=0}^{N-1}\inner[h]{d_t\mm_h^{i+1}}{\zzeta(t_i)}
& =
- k \sum_{i=0}^{N-1} \inner[h]{\mm_h^{i+1/2} \times \ww_h^{i+1/2}}{\zzeta(t_i)}, \\
 \tau k \sum_{i=0}^{N-1} \inner[h]{d_t\ww_h^{i+1}}{\vvphi(t_i)}
& = - k \sum_{i=0}^{N-1}
\inner{\Grad\mm_h^{i+1/2}}{\Grad(\vvphi(t_i) \times \mm_h^{i+1/2})} \\
- \alpha k \sum_{i=0}^{N-1} & \inner[h]{\mm_h^{i+1/2} \times d_t \mm_h^{i+1}}{\vvphi(t_i)}
- k \sum_{i=0}^{N-1} \inner[h]{\mm_h^{i+1/2} \times \ww_h^{i+1/2}}{\vvphi(t_i)},
\end{align*}
where we extend $\zzeta , \vvphi$ by zero in $(T,t_N)$.
Next, we rewrite the first term on the left-hand side of both equations
using the summation by parts formula~\eqref{eq:summation}:
\begin{align*}
- k \sum_{i=0}^{N-1}\inner[h]{\mm_h^{i+1}}{d_t \zzeta(t_{i+1})}
- \inner[h]{\mm_h^0}{\zzeta(0)}
& =
- k \sum_{i=0}^{N-1} \inner[h]{\mm_h^{i+1/2} \times \ww_h^{i+1/2}}{\zzeta(t_i)}, \\
-  \tau k \sum_{i=0}^{N-1} \inner[h]{\ww_h^{i+1}}{d_t\vvphi(t_{i+1})}
-  \tau \inner[h]{\ww_h^0}{\vvphi(0)}
& = - k \sum_{i=0}^{N-1}
\inner{\Grad\mm_h^{i+1/2}}{\Grad(\vvphi(t_i) \times \mm_h^{i+1/2})} \\
- \alpha k \sum_{i=0}^{N-1} \inner[h]{\mm_h^{i+1/2} \times d_t & \mm_h^{i+1}}{\vvphi(t_i)}
 - k \sum_{i=0}^{N-1} \inner[h]{\mm_h^{i+1/2} \times \ww_h^{i+1/2}}{\vvphi(t_i)}.
\end{align*}
Rewriting the space-time integrals of both equations
in terms of the time reconstructions defined in~\eqref{eq:reconstructions},
we obtain that
\begin{align*}
- \int_0^T \inner[h]{\mm_{hk}^+(t)}{\partial_t \zzeta_k(t)} \dt
- \inner[h]{\mm_h^0}{\zzeta(0)}
& =
- \int_0^T \inner[h]{\overline{\mm}_{hk}(t) \times \overline{\ww}_{hk}(t)}{\zzeta_k^-(t)} \dt, \\
-  \tau \int_0^T \inner[h]{\ww_{hk}^+(t)}{\partial_t\vvphi_k(t)} \dt
-  \tau \inner[h]{\ww_h^0}{\vvphi(0)}
& = - \int_0^T \inner{\Grad\overline{\mm}_{hk}(t)}{\Grad(\vvphi_k^-(t) \times \overline{\mm}_{hk}(t))} \dt \\
- \alpha \int_0^T \inner[h]{\overline{\mm}_{hk}(t) \times \partial_t \mm_{hk}(t)&}{\vvphi_k^-(t)} \dt
- \int_0^T \inner[h]{\overline{\mm}_{hk}(t) \times \overline{\ww}_{hk}(t)}{\vvphi_k^-(t)} \dt.
\end{align*}
Using the available convergence results and~\eqref{eq:h-scalar-product},
we can proceed as in~\cite[Section~3]{bp2006}
and pass the latter equations to the limit as $h,k \to 0$.
Rearranging the terms, we obtain that
\begin{align}
\label{eq:prefinal_var1}
- \int_0^T \inner{\mm(t)}{\partial_t \zzeta(t)} \dt
& =
- \int_0^T \inner{\mm(t) \times \ww(t)}{\zzeta(t)} \dt
+ \inner{\mm^0}{\zzeta(0)}, \\
- \int_0^T \inner{\mm(t) \times \ww(t)}{\vvphi(t)} \dt
& =
\notag
- \int_0^T \inner{\Grad\mm(t)}{\Grad(\vvphi(t) \times \mm(t))} \dt \\
\label{eq:prefinal_var2}
+ \alpha \int_0^T \inner{\mm(t) \times \partial_t \mm(t)&}{\vvphi(t)} \dt
-  \tau \int_0^T \inner{\ww(t)}{\partial_t\vvphi(t)} \dt
-  \tau \inner{\mm^0 \times \vv^0}{\vvphi(0)}.
\end{align}
To conclude,
following~\cite[Section~3.3]{kw2014},
we observe that~\eqref{eq:prefinal_var1} reveals that
\begin{equation} \label{eq:final1}
\mmt = - \mm \times \ww \quad \text{a.e.\ in } \Omega \times (0,T).
\end{equation}
Since $\abs{\mm} = 1$ and $\mm\cdot\ww = 0$,
using~\eqref{eq:triple}, it follows that
\begin{equation} \label{eq:final2}
\ww = \mm \times \mmt \quad \text{a.e.\ in } \Omega \times (0,T).
\end{equation}
Using~\eqref{eq:final1} and~\eqref{eq:final2} in the term on the left-hand side
and in the third term on the right-hand side of~\eqref{eq:prefinal_var2},
respectively,
we obtain the variational formulation~\eqref{eq:weak:variational}.
By density, it follows that $\mm$ satisfies~\eqref{eq:weak:variational}
for all $\vvphi\in C^{\infty}_c([0,T);\HH^1(\Omega))$.
This shows that $\mm$ satisfies part~{\rm(iii)} of Definition~\ref{def:weak}.
The verification of part~{\rm(ii)}
and~{\rm(iv)} can be performed
using the argument employed for Algorithm~\ref{alg:mps}.
This proves the desired convergence and concludes the proof.
\end{proof}
%%%%%%%%%%%%%%%%%%%%

In the following proposition,
we establish the boundedness of the approximations generated by Algorithm~\ref{alg:mpsx}.

%%%%%%%%%%%%%%%%%%%%
\begin{proposition} \label{prop:boundedness}
Let $j \in \N$.
Assume that $\eps = \OO(\hmin)$ as $h,\eps \to 0$.
Then, there exist thresholds $h_0, k_0, \eps_0 >0$ such that,
if $h < h_0$, $k < k_0$, and $\eps < \eps_0$,
the approximations generated by Algorithm~\ref{alg:mpsx} satisfy the inequality
\begin{equation} \label{eq:stability_mpsx}
\norm[\LL^2(\Omega)]{\Grad\mm_h^j}^2
+ \norm[h]{\ww_h^j}^2
+ k \sum_{i=0}^{j-1} \norm[h]{d_t\mm_h^{i+1}}^2
\le C (1 + jk) \exp(jk).
\end{equation}
The thresholds $h_0, k_0, \eps_0 >0$ and the constant $C>0$
depend only on the shape-regularity of $\T_h$ and the problem data.
\end{proposition}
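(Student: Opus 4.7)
The plan is to iterate the inexact discrete energy identity~\eqref{eq:mpsx:energy}, absorb the defect introduced by the inexact fixed-point iteration using the assumption $\eps = \OO(\hmin)$, and then apply a discrete Gronwall inequality. Summing~\eqref{eq:mpsx:energy} for $i = 0, \dots, j-1$, the leading terms telescope and we obtain
\begin{equation*}
\J_h(\mm_h^j,\ww_h^j) + \alpha k \sum_{i=0}^{j-1} \norm[h]{d_t\mm_h^{i+1}}^2 = \J_h(\mm_h^0,\ww_h^0) - k \sum_{i=0}^{j-1} \inner[h]{\mm_h^{i+1/2}\times\rr_h^i}{\Ph\heff[\mm_h^{i+1/2}] - \alpha \, d_t\mm_h^{i+1}},
\end{equation*}
so the task reduces to controlling the defect sum on the right-hand side. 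Since Proposition~\ref{prop:fixed-point}{\rm(iii)} ensures $\mm_h^i \in \Mh$ for all $i$, we have $\norm[\LL^\infty(\Omega)]{\mm_h^{i+1/2}} \le 1$; combined with $\norm[h]{\rr_h^i} \le \eps$, this gives $\norm[h]{\mm_h^{i+1/2}\times\rr_h^i}\le\eps$ nodewise in the mass-lumped norm.

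The $\alpha \, d_t\mm_h^{i+1}$-piece of the defect is easy: mass-lumped Cauchy--Schwarz and Young's inequality yield
\begin{equation*}
\alpha k \lvert\inner[h]{\mm_h^{i+1/2}\times\rr_h^i}{d_t\mm_h^{i+1}}\rvert \le \frac{\alpha k}{2}\norm[h]{d_t\mm_h^{i+1}}^2 + \frac{\alpha k \eps^2}{2},
\end{equation*}
where the first summand is absorbed on the left of the telescoped identity and the second is harmless since $\eps^2 \lesssim \hmin^2 \le h^2$ is bounded. The main obstacle is the $\Ph\heff$-piece, because the discrete pseudo-projection~\eqref{eq:pseudo-projection} conceals an inverse-estimate factor $\hmin^{-1}$. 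Testing~\eqref{eq:pseudo-projection} with $\pphi_h = \Ph\heff[\mm_h^{i+1/2}]$ itself, rewriting the right-hand side via~\eqref{eq:heff} as $-\inner{\Grad\mm_h^{i+1/2}}{\Grad\Ph\heff[\mm_h^{i+1/2}]}$, and combining Cauchy--Schwarz with the norm equivalence and a standard inverse estimate yields the bound $\norm[h]{\Ph\heff[\mm_h^{i+1/2}]} \lesssim \hmin^{-1}\norm[\LL^2(\Omega)]{\Grad\mm_h^{i+1/2}}$. This is precisely where $\eps = \OO(\hmin)$ enters: it converts the dangerous factor $\eps\hmin^{-1}$ into a bounded one, so that Young's inequality together with the Jensen-type bound $\norm[\LL^2(\Omega)]{\Grad\mm_h^{i+1/2}}^2 \le \frac12(\norm[\LL^2(\Omega)]{\Grad\mm_h^i}^2 + \norm[\LL^2(\Omega)]{\Grad\mm_h^{i+1}}^2)$ controls each term of the corresponding sum by $Ck + Ck(\E[\mm_h^i] + \E[\mm_h^{i+1}])$.

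Combining the two estimates and summing, the telescoped identity reduces to
\begin{equation*}
\J_h(\mm_h^j,\ww_h^j) + \frac{\alpha k}{2}\sum_{i=0}^{j-1}\norm[h]{d_t\mm_h^{i+1}}^2 \le \J_h(\mm_h^0,\ww_h^0) + C jk + C k \sum_{i=0}^{j}\J_h(\mm_h^i,\ww_h^i).
\end{equation*}
For $k$ below a threshold depending only on $C$, the $i=j$ term on the right is absorbed on the left, and the discrete Gronwall inequality then yields~\eqref{eq:stability_mpsx}: the factor $1 + jk$ accounts for the source $Cjk$ while $\exp(jk)$ reflects the Gronwall amplification. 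Uniform boundedness of $\J_h(\mm_h^0,\ww_h^0)$ is provided by~\eqref{eq:convergence0}; the thresholds $h_0$, $k_0$, and $\eps_0$ originate from the smallness conditions used to turn $\eps = \OO(\hmin)$ into a quantitative inequality $\eps \le \eps_0 \hmin$ (valid for $h \le h_0$) and to enable both the Young absorption and the Gronwall step.
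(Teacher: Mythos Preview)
Your proof is correct and follows essentially the same route as the paper: telescope the inexact energy identity~\eqref{eq:mpsx:energy}, bound the defect using $\norm[\LL^\infty(\Omega)]{\mm_h^{i+1/2}}\le 1$, $\norm[h]{\rr_h^i}\le\eps$, and the inverse-type estimate $\norm[h]{\Ph\heff[\mm_h^{i+1/2}]}\lesssim\hmin^{-1}\norm[\LL^2(\Omega)]{\Grad\mm_h^{i+1/2}}$, then apply the discrete Gronwall lemma. The only cosmetic difference is in the Young splitting for the $d_t\mm_h^{i+1}$-piece: you absorb half of the damping term directly, whereas the paper uses $\norm[h]{d_t\mm_h^{i+1}}\le\tfrac12+\tfrac12\norm[h]{d_t\mm_h^{i+1}}^2$ and ends up with a factor $\alpha(1-\eps)$ on the left; both variants are valid.
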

%%%%%%%%%%%%%%%%%%%%

%%%%%%%%%%%%%%%%%%%%
\begin{proof}
Let $i = 0, \dots, j-1$.
Proposition~\eqref{prop:energy}{\rm(iii)} yields~\eqref{eq:mpsx:energy}.
Taking the sum over $i=0,\dots,j-1$, we obtain the identity
\begin{equation*}
\begin{split}
& \frac{1}{2} \norm[\LL^2(\Omega)]{\Grad\mm_h^j}^2
+ \frac{\tau}{2} \norm[h]{\ww_h^j}^2
+ \alpha k \sum_{i=0}^{j-1} \norm[h]{d_t\mm_h^{i+1}}^2 \\
& \quad = \frac{1}{2} \norm[\LL^2(\Omega)]{\Grad\mm_h^0}^2
+ \frac{\tau}{2} \norm[h]{\ww_h^0}^2
- k \sum_{i=0}^{j-1} \inner[h]{\mm_h^{i+1/2} \times \rr_h^i}{\Ph\heff[\mm_h^{i+1/2}] - \alpha \, d_t\mm_h^{i+1}}.
\end{split}
\end{equation*}
A straightforward application of the discrete Young inequality yields the inequalities
\begin{align*}
\sum_{i=0}^{j-1} \norm[\LL^2(\Omega)]{\Grad\mm_h^{i + 1/2}}
& \le \frac{j}{2}
+ \frac{1}{2} \sum_{i=0}^{j-1} \norm[\LL^2(\Omega)]{\Grad\mm_h^i}^2
+ \frac{1}{4} \norm[\LL^2(\Omega)]{\Grad\mm_h^j}^2,\\
\norm[h]{d_t\mm_h^{i+1}}
&\le
\frac{1}{2}
+ \frac{1}{2} \norm[h]{d_t\mm_h^{i+1}}^2.
\end{align*}
Since $\norm[\LL^{\infty}(\Omega)]{\mm_h^{i+1/2}}\le 1$,
$\norm[h]{\rr_h^i}\le \eps$,
and $\norm[h]{\Ph\heff[\mm_h^{i+1/2}]} \le C \hmin^{-1} \norm[\LL^2(\Omega)]{\Grad\mm_h^{i+1/2}}$
(where $C>0$ depends only on the shape-regularity of $\T_h$),
we obtain that
\begin{equation*}
\begin{split}
& k \sum_{i=0}^{j-1} \inner[h]{\mm_h^{i+1/2} \times \rr_h^i}{\Ph\heff[\mm_h^{i+1/2}] - \alpha \, d_t\mm_h^{i+1}} \\
& \ \le k \sum_{i=0}^{j-1}
\norm[\LL^{\infty}(\Omega)]{\mm_h^{i+1/2}} \norm[h]{\rr_h^i}
\big( \norm[h]{\Ph\heff[\mm_h^{i+1/2}]} + \alpha \norm[h]{d_t\mm_h^{i+1}} \big) \\
& \ \le \eps k \sum_{i=0}^{j-1}
\big(C \hmin^{-1} \norm[\LL^2(\Omega)]{\Grad\mm_h^{i+1/2}} + \alpha \norm[h]{d_t\mm_h^{i+1}} \big) \\
& \ \le \frac{(C \hmin^{-1} + \alpha) \eps j k}{2}
+ \frac{C \eps \hmin^{-1} k}{4} \norm[\LL^2(\Omega)]{\Grad\mm_h^j}^2
+ \frac{C \eps \hmin^{-1} k}{2} \sum_{i=0}^{j-1} \norm[\LL^2(\Omega)]{\Grad\mm_h^i}^2
+ \alpha \eps k \sum_{i=0}^{j-1} \norm[h]{d_t\mm_h^{i+1}}^2.
\end{split}
\end{equation*}
Overall, we thus obtain that
\begin{equation*}
\begin{split}
& \frac{1}{2} \left(1-\frac{C \eps \hmin^{-1} k}{2}\right) \norm[\LL^2(\Omega)]{\Grad\mm_h^j}^2
+ \frac{\tau}{2} \norm[h]{\ww_h^j}^2
+ \alpha (1 - \eps) k \sum_{i=0}^{j-1} \norm[h]{d_t\mm_h^{i+1}}^2 \\
& \quad \le \frac{1}{2} \norm[\LL^2(\Omega)]{\Grad\mm_h^0}^2
+ \frac{\tau}{2} \norm[h]{\ww_h^0}^2
+ \frac{(C \hmin^{-1} + \alpha) \eps j k}{2}
+ \frac{C \eps \hmin^{-1} k}{2} \sum_{i=0}^{j-1} \norm[\LL^2(\Omega)]{\Grad\mm_h^i}^2.
\end{split}
\end{equation*}
Using this estimate, the convergence~\eqref{eq:convergence0}, and
the assumption $\eps = \OO(\hmin)$ as $h,\eps \to 0$,
\eqref{eq:stability_mpsx} can be shown by applying 
the discrete Gronwall lemma
(assuming the discretization parameters to be sufficiently small).
\end{proof}
%%%%%%%%%%%%%%%%%%%%

The boundedness result established in Proposition~\ref{prop:boundedness}
is the starting point to prove Theorem~\ref{thm:convergence}{\rm(ii)}.
We omit the presentation of the proof, since this follows line-by-line
the argument used to show the convergence of Algorithm~\ref{alg:mpsx}.
We only stress that, in the proof of the variational formulation~\eqref{eq:weak:variational}
and the energy inequality~\eqref{eq:weak:energy},
the additional contributions arising from the inexact solution of the nonlinear system
are always uniformly bounded by $\eps$
and therefore vanish in the limit.

%%%%%%%%%%%%%%%%%%%%
\section*{Acknowledgements}
The author wishes to thank
M.\ d'Aquino (Parthenope University of Naples)
for his help with the design of the experiment of Section~\ref{sec:nutation}.
This research has been supported by the Austrian Science Fund (FWF)
through the special research program (SFB) \emph{Taming complexity in partial differential systems}
(grant F65).
%%%%%%%%%%%%%%%%%%%%

%%%%%%%%%%%%%%%%%%%%
\bibliographystyle{acm}
\bibliography{ref}

\begin{thebibliography}{10}

\bibitem{ahpprs2014}
{\sc Abert, C., Hrkac, G., Page, M., Praetorius, D., Ruggeri, M., and Suess,
  D.}
\newblock Spin-polarized transport in ferromagnetic multilayers: {A}n
  unconditionally convergent {FEM} integrator.
\newblock {\em Comput. Math. Appl. 68}, 6 (2014), 639--654.

\bibitem{alouges2008a}
{\sc Alouges, F.}
\newblock A new finite element scheme for {L}andau--{L}ifchitz equations.
\newblock {\em Discrete Contin. Dyn. Syst. Ser. S 1}, 2 (2008), 187--196.

\bibitem{aj2006}
{\sc Alouges, F., and Jaisson, P.}
\newblock Convergence of a finite element discretization for the
  {L}andau--{L}ifshitz equation in micromagnetism.
\newblock {\em Math. Models Methods Appl. Sci. 16}, 2 (2006), 299--316.

\bibitem{akst2014}
{\sc Alouges, F., Kritsikis, E., Steiner, J., and Toussaint, J.-C.}
\newblock A convergent and precise finite element scheme for
  {L}andau--{L}ifschitz--{G}ilbert equation.
\newblock {\em Numer. Math. 128}, 3 (2014), 407--430.

\bibitem{as1992}
{\sc Alouges, F., and Soyeur, A.}
\newblock On global weak solutions for {L}andau--{L}ifshitz equations:
  Existence and nonuniqueness.
\newblock {\em Nonlinear Anal. 18}, 11 (1992), 1071--1084.

\bibitem{bartels2005}
{\sc Bartels, S.}
\newblock Stability and convergence of finite-element approximation schemes for
  harmonic maps.
\newblock {\em SIAM J. Numer. Anal. 43}, 1 (2005), 220--238.

\bibitem{bartels2009}
{\sc Bartels, S.}
\newblock Semi-implicit approximation of wave maps into smooth or convex
  surfaces.
\newblock {\em SIAM J. Numer. Anal. 47}, 5 (2009), 3486--3506.

\bibitem{bartels2015a}
{\sc Bartels, S.}
\newblock Fast and accurate finite element approximation of wave maps into
  spheres.
\newblock {\em ESAIM Math. Model. Numer. Anal. 49}, 2 (2015), 551--558.

\bibitem{bartels2015}
{\sc Bartels, S.}
\newblock {\em Numerical methods for nonlinear partial differential equations},
  vol.~47 of {\em Springer Series in Computational Mathematics}.
\newblock Springer, 2015.

\bibitem{bartels2016}
{\sc Bartels, S.}
\newblock Projection-free approximation of geometrically constrained partial
  differential equations.
\newblock {\em Math. Comp. 85}, 299 (2016), 1033--1049.

\bibitem{bfp2007}
{\sc Bartels, S., Feng, X., and Prohl, A.}
\newblock Finite element approximations of wave maps into spheres.
\newblock {\em SIAM J. Numer. Anal. 46}, 1 (2007), 61--87.

\bibitem{bkp2008}
{\sc Bartels, S., Ko, J., and Prohl, A.}
\newblock Numerical analysis of an explicit approximation scheme for the
  {L}andau--{L}ifshitz--{G}ilbert equation.
\newblock {\em Math. Comp. 77}, 262 (2008), 773--788.

\bibitem{blp2009}
{\sc Bartels, S., Lubich, C., and Prohl, A.}
\newblock Convergent discretization of heat and wave map flows to spheres using
  approximate discrete {L}agrange multipliers.
\newblock {\em Math. Comp. 78}, 267 (2009), 1269--1292.

\bibitem{bp2006}
{\sc Bartels, S., and Prohl, A.}
\newblock Convergence of an implicit finite element method for the
  {L}andau--{L}ifshitz--{G}ilbert equation.
\newblock {\em SIAM J. Numer. Anal. 44}, 4 (2006), 1405--1419.

\bibitem{bmdb1996}
{\sc Beaurepaire, E., Merle, J.-C., Daunois, A., and Bigot, J.-Y.}
\newblock Ultrafast spin dynamics in ferromagnetic nickel.
\newblock {\em Phys. Rev. Lett. 76\/} (1996), 4250--4253.

\bibitem{bffgpprs2014}
{\sc Bruckner, F., Feischl, M., F{\"u}hrer, T., Goldenits, P., Page, M.,
  Praetorius, D., Ruggeri, M., and Suess, D.}
\newblock Multiscale modeling in micromagnetics: {E}xistence of solutions and
  numerical integration.
\newblock {\em Math. Models Methods Appl. Sci. 24}, 13 (2014), 2627--2662.

\bibitem{carbou2001}
{\sc Carbou, G.}
\newblock Thin layers in micromagnetism.
\newblock {\em Math. Models Methods Appl. Sci. 11}, 9 (2001), 1529--1546.

\bibitem{crw2011}
{\sc Ciornei, M.-C., Rub\'{i}, J.~M., and Wegrowe, J.-E.}
\newblock Magnetization dynamics in the inertial regime: {N}utation predicted
  at short time scales.
\newblock {\em Phys. Rev. B 83\/} (2011), 020410.

\bibitem{difratta2020}
{\sc Di~Fratta, G.}
\newblock Micromagnetics of curved thin films.
\newblock {\em Z. Angew. Math. Phys. 71}, 4 (2020), 111.

\bibitem{dpprs2019}
{\sc Di~Fratta, G., Pfeiler, C.-M., Praetorius, D., Ruggeri, M., and Stiftner,
  B.}
\newblock Linear second order {IMEX}-type integrator for the (eddy current)
  {L}andau--{L}ifshitz--{G}ilbert equation.
\newblock {\em IMA J. Numer. Anal. 40}, 4 (2020), 2802--2838.

\bibitem{ft2017}
{\sc Feischl, M., and Tran, T.}
\newblock The {Eddy Current--LLG} equations: {FEM-BEM} coupling and a priori
  error estimates.
\newblock {\em SIAM J. Numer. Anal. 55}, 4 (2017), 1786--1819.

\bibitem{gj1997}
{\sc Gioia, G., and James, R.~D.}
\newblock Micromagnetics of very thin films.
\newblock {\em Proc. Roy. Soc. Lond. A 453}, 1956 (1997), 213--223.

\bibitem{ht2014}
{\sc Hadda, M., and Tilioua, M.}
\newblock On magnetization dynamics with inertial effects.
\newblock {\em J. Engrg. Math. 88\/} (2014), 197--206.

\bibitem{hpprss2019}
{\sc Hrkac, G., Pfeiler, C.-M., Praetorius, D., Ruggeri, M., Segatti, A., and
  Stiftner, B.}
\newblock Convergent tangent plane integrators for the simulation of chiral
  magnetic skyrmion dynamics.
\newblock {\em Adv. Comput. Math. 45}, 3 (2019), 1329--1368.

\bibitem{kw2014}
{\sc Karper, T.~K., and Weber, F.}
\newblock A new angular momentum method for computing wave maps into spheres.
\newblock {\em SIAM J. Numer. Anal. 52}, 4 (2014), 2073--2091.

\bibitem{kw2018}
{\sc Kim, E., and Wilkening, J.}
\newblock Convergence of a mass-lumped finite element method for the
  {L}andau--{L}ifshitz equation.
\newblock {\em Quart. Appl. Math. 76}, 2 (2018), 383--405.

\bibitem{lw2008}
{\sc Lin, F., and Wang, C.}
\newblock {\em The analysis of harmonic maps and their heat flows}.
\newblock World Scientific Publishing Co. Pte. Ltd., Hackensack, NJ, 2008.

\bibitem{mt2016}
{\sc Moumni, M., and Tilioua, M.}
\newblock A finite-difference scheme for a model of magnetization dynamics with
  inertial effects.
\newblock {\em J. Engrg. Math. 100\/} (2016), 95--106.

\bibitem{inertia2020}
{\sc Neeraj, K., Awari, N., Kovalev, S., Polley, D., Hagström, N.~Z.,
  Arekapudi, S.~S., Semisalova, A., Lenz, K., Green, B., Deinert, J.-C.,
  Ilyakov, I., Chen, M., Bawatna, M., Scalera, V., d’Aquino, M., Serpico, C.,
  Hellwig, O., Wegrowe, J.-E., Gensch, M., and Bonetti, S.}
\newblock Inertial spin dynamics in ferromagnets.
\newblock {\em Nat. Phys.\/} (2020).

\bibitem{MUMAG}
{\sc NIST}.
\newblock Micromagnetic modeling activity group website.
\newblock
  \href{http://www.ctcms.nist.gov/~rdm/mumag.html}{http://www.ctcms.nist.gov/$\sim$rdm/mumag.html}.
\newblock Accessed on November 25, 2020.

\bibitem{prs2018}
{\sc Praetorius, D., Ruggeri, M., and Stiftner, B.}
\newblock Convergence of an implicit-explicit midpoint scheme for computational
  micromagnetics.
\newblock {\em Comput.\ Math.\ Appl. 75}, 5 (2018).

\bibitem{tataru2004}
{\sc Tataru, D.}
\newblock The wave maps equation.
\newblock {\em Bull. Amer. Math. Soc. (N.S.) 41}, 2 (2004), 185--204.

\bibitem{wm2016}
{\sc Walowski, J., and M\"unzenberg, M.}
\newblock Perspective: {U}ltrafast magnetism and {TH}z spintronics.
\newblock {\em J. Appl. Phys. 120}, 14 (2016), 140901.

\end{thebibliography}
%%%%%%%%%%%%%%%%%%%%

\end{document}